\def\normo#1{\left\|#1\right\|}
\def\normb#1{\big\|#1\big\|}
\def\aabs#1{\left|#1\right|}
\def\brk#1{\left(#1\right)}
\def\rev#1{\frac{1}{#1}}
\def\norm#1{\|#1\|}
\def\jb#1{\langle#1\rangle}
\def\wt#1{\widetilde{#1}}
\def\wh#1{\widehat{#1}}
\newcommand{\T}{{\mathbb T}}
\newcommand{\E}{{\mathbb E}}
\newcommand{\R}{{\mathbb R}}
\newcommand{\Z}{{\mathbb Z}}
\newcommand{\ft}{{\mathcal{F}}}
\newcommand{\les}{{\lesssim}}
\newcommand{\ra}{{\rightarrow}}
\newcommand{\Sch}{{\mathcal{S}}}
\newcommand{\supp}{{\mbox{supp}}}
\newcommand{\dr}{\omega}
\theoremstyle{plain}
  \newtheorem{theorem}[subsection]{Theorem}
  \newtheorem{proposition}[subsection]{Proposition}
  \newtheorem{lemma}[subsection]{Lemma}
\theoremstyle{remark}
\theoremstyle{definition}
\numberwithin{equation}{section}
\begin{document}

\title[Local regularity of the KP-I equation]{On the local regularity of the KP-I equation in anisotropic Sobolev space}
\author{Zihua Guo, Lizhong Peng, Baoxiang Wang}
\address{LMAM, School of Mathematical Sciences, Peking University, Beijing 100871, People's Republic of China.}
\email{zihuaguo, lzpeng, wbx@math.pku.edu.cn}

\keywords{KP-I initial-value problem, Local well-posedness}

\begin{abstract}
We prove that the KP-I initial-value problem
\begin{eqnarray*}
\begin{cases}
\partial_tu+\partial_x^3u-\partial_x^{-1}\partial_y^2u+\partial_x(u^2/2)=0\text{ on }{\R}^2_{x,y}\times {\R}_t;\\
u(x,y,0)=\phi(x,y),
\end{cases}
\end{eqnarray*}
is locally well-posed in the space
\begin{eqnarray*}
H^{1,0}(\R^2)=\{\phi\in L^2(\R^2): \
\norm{\phi}_{H^{1,0}(\R^2)}\approx
\norm{\phi}_{L^2}+\norm{\partial_x\phi}_{L^2}<\infty\}.
\end{eqnarray*}
\end{abstract}

\maketitle


\section{Introduction}\label{section1}

In this paper we study the Cauchy problem for the KP-I equation
\begin{eqnarray}\label{eq:kpI}
\begin{cases}
\partial_tu+\partial_x^3u-\partial_x^{-1}\partial_y^2u+\partial_x(u^2/2)=0;\\
u(x,y,0)=\phi(x,y),
\end{cases}
\end{eqnarray}
where $u(x,y,t):\R^3 \rightarrow \R$ is the unknown function and
$\phi$ is the given initial data. The KP-I equation \eqref{eq:kpI}
and the KP-II equation (in which the sign of the term
$\partial_x^{-1}\partial_y^2u$ is $+$ instead of $-$) arise in
physical contexts as models for the propagation of dispersive long
waves with weak transverse effects.

Both of the KP-I and KP-II equations were widely studied. From the
point of view of well-posedness, the KP-II equation is better
understood than the KP-I equation. The main reason is that the KP-II
equation has a good geometric structure on the resonance while that
for the KP-I equation is much more complicated. It was proved by
Bourgain \cite{BourKP} that KP-II inital-value problem is globally
well-posed for suitable data in $L^2$ on both $\R^2$ and $\T^2$.
Takaoka and Tzvetkov \cite{TT} obtained local well-posedness for
data in anisotropic Sobolev space $H^{s_1,s_2}$, see also
\cite{Takaoka, Hadac} for the improvement to the full subcritical
cases. Their results were generalized by Hadac, Herr and Koch
\cite{HHK} to the sharp results in the critical space.

For the KP-I equation, it was proved by Molinet, Saut and Tzvetkov
\cite{MoSaTz1} that Picard iterative methods fail in standard
Sobolev space and $H^{s_1,s_2}$ for $s_1,s_2\in \R$, since it was
proved that the solution flow map fails to be $C^2$ smooth at the
origin in these spaces. However, in some other weighted spaces this
scheme still works, for example see \cite{CIKS}. By refined energy
methods, it is known that the KP-I equation is globally well-posed
in the "second" energy spaces, see \cite{Ke, MoSaTz2, MoSaTz3}.
Recently, Ionescu, Kenig and Tataru \cite{IKT} obtained global
well-posedness in the natural energy space $\E^1=\{\phi\in
L^2(\R^2),\partial_x \phi \in L^2(\R^2),
\partial_x^{-1}\partial_y \phi \in L^2(\R^2)\}$, by introducing some
new methods based on some earlier ideas in \cite{KoTa}, which can be
viewed as a combination of the Bourgain space $X^{s,b}$ methods and
energy estimates. Compared to the KP-II equation, well-posedness in
$L^2$ for the KP-I initial-value problem remains a challenging open
problem.

The purpose of this paper is to step forward in this direction. In
view of the results in the energy space $\E^1$, to prove the
well-posedness in $L^2$, the first step would be to remove the
condition $\partial_x^{-1}\partial_y \phi \in L^2$. Thus the natural
space for the initial data is now the anisotropic Sobolev space
$H^{s_1,s_2}$ which is defined by
\[
H^{s_1,s_2}=\{\phi \in L^2(\R^2):
\norm{\phi}_{H^{s_1,s_2}}=\norm{\wh{\phi}(\xi,\mu)(1+|\xi|^2)^{s_1/2}(1+|\mu|^2)^{s_2/2}}_{L^2_{\xi,\mu}}<\infty\}.
\]
We mainly consider the initial data in $H^{1,0}$, inspired by the
work of Molinet, Saut and Tzvetkov \cite{MoSaTz4} in which they
proved local well-posedness in $H^{s,0}$ for $s>3/2$.

Now we state our main results:

\begin{theorem}\label{thmmain}
(a) Assume $\phi\in H^{\infty,0}$. Then there exists
$T=T(\norm{u_0}_{H^{1,0}})>0$ such that there is a unique solution
$u=S^\infty_T(u_0)\in C([-T,T]:H^{\infty,0})$ of the Cauchy problem
\eqref{eq:kpI}. In addition, for any $\sigma\geq 1$
\begin{eqnarray}
\sup_{|t|\leq T} \norm{S^\infty_T(u_0)(t)}_{H^{\sigma,0}}\leq
C(T,\sigma,\norm{u_0}_{H^{\sigma,0}}).
\end{eqnarray}

(b) Moreover, the mapping $S_T^{\infty,0}:H^{\infty,0}\rightarrow
C([-T,T]:H^{\infty,0})$ extends uniquely to a continuous mapping
\[S_T^s:H^{1,0} \rightarrow C([-T,T]:H^{1,0}).\]
\end{theorem}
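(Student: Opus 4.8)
Because the solution map is not $C^2$ at the origin in any $H^{s_1,s_2}$ (Molinet--Saut--Tzvetkov \cite{MoSaTz1}), no Picard iteration can work, so I would follow the scheme introduced for KP-I by Ionescu--Kenig--Tataru \cite{IKT} (after \cite{KoTa}): energy estimates combined with Bourgain-type spaces that carry a frequency-dependent time localization. Fix a Littlewood--Paley decomposition $1=\sum_{k\ge0}P_k$ in $\xi$; at $\xi$-frequency $2^k$ one works on time intervals of length a negative power of $2^k$ (the exponent dictated by the parabolic scaling $u\mapsto\lambda^2u(\lambda x,\lambda^2y,\lambda^3t)$), on which the norm is of $X^{s,1/2,1}$ type. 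Assembling these gives an $F^s$ space for the solution, a dual space $N^s$ for the nonlinearity, and an energy space $B^s$ with norm $\approx\norm{u(0)}_{H^{s,0}}$ plus a supremum of $L^2$-based norms over short subintervals; besides the usual low-modulation correction near $\xi=0$, it appears one must add a component adapted to the region $\abs{\mu}\gg\abs{\xi}^2$, where the dispersion term $\mu^2/\xi$ dominates and the vanishing $\mu$-index gives no a priori control --- a region handled automatically in $\E^1$ via $\partial_x^{-1}\partial_y\phi\in L^2$, but now to be controlled directly. The proof then turns on three estimates, each proved after dyadic decomposition: a \emph{linear estimate} $\norm{u}_{F^s}\les\norm{u}_{B^s}+\norm{(\partial_t+\partial_x^3-\partial_x^{-1}\partial_y^2)u}_{N^s}$; a \emph{bilinear estimate} $\norm{\partial_x(uv)}_{N^s}\les\norm{u}_{F^s}\norm{v}_{F^1}+\norm{u}_{F^1}\norm{v}_{F^s}$, in which the short time windows absorb the apparent high--low derivative loss; and an \emph{energy estimate} $\norm{u}_{B^s}^2\les\norm{u(0)}_{H^{s,0}}^2+\norm{u}_{F^1}\norm{u}_{F^s}^2$ for solutions of \eqref{eq:kpI}, coming from a trilinear analysis of $\frac{d}{dt}\norm{\partial_x^su}_{L^2}^2$.

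Granting these, part (a) would go as follows. Since $\norm{\lambda^2\phi(\lambda\cdot,\lambda^2\cdot)}_{H^{1,0}}\to0$ as $\lambda\to0$, after rescaling we may assume $\norm{\phi}_{H^{1,0}}\le\epsilon_0$ and work on $[-1,1]$, the existence time in the original variables being $T\sim\lambda^3$, a function of $\norm{\phi}_{H^{1,0}}$ alone. For $\phi\in H^{\infty,0}$ a smooth solution on a short interval is provided by \cite{MoSaTz4}; inserting it into the three estimates and running the standard continuity argument in the interval length gives, for $\epsilon_0$ small, $\norm{u}_{F^1([-1,1])}\les\norm{\phi}_{H^{1,0}}$, and then --- since the bilinear estimate is linear in the $F^\sigma$ factor --- $\norm{u}_{F^\sigma([-1,1])}\les_{\norm{\phi}_{H^{1,0}}}\norm{\phi}_{H^{\sigma,0}}$ for every $\sigma\ge1$, which after undoing the scaling is the asserted bound. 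Uniqueness in $C([-T,T]:H^{\infty,0})$ is immediate: the difference $w$ of two smooth solutions obeys $\frac{d}{dt}\norm{w}_{L^2}^2\les\norm{\partial_x(u_1+u_2)}_{L^\infty}\norm{w}_{L^2}^2$, so $w\equiv0$ by Gronwall.

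For part (b) I would rely on two facts about solutions with small data. First, a difference estimate: if $u,v$ solve \eqref{eq:kpI} then $w=u-v$ solves $\partial_tw+\partial_x^3w-\partial_x^{-1}\partial_y^2w+\partial_x(w(u+v)/2)=0$, and feeding this into the linear estimate, the bilinear estimate (with the low-regularity factor $w$ in $F^s$ and $u+v$ in $F^1$), and the energy estimate yields $\norm{u-v}_{C([-T,T]:H^{s,0})}\les_{\norm{\phi}_{H^{1,0}}}\norm{\phi-\psi}_{H^{s,0}}$, certainly for $s=1-\delta$ with $\delta>0$ small (and perhaps up to $s=1$, depending on whether a resonant term in the energy estimate turns out to be benign). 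Second, a frequency-envelope refinement of part (a): running the three estimates with a slowly varying majorant $\{c_k\}$ of $\{\norm{P_k\phi}_{H^{1,0}}\}$ gives $\norm{P_ku}_{F^1}\les c_k$, hence $\sup_{\abs t\le T}\norm{P_{>2^j}u}_{H^{1,0}}^2\les\sum_{k>j}c_k^2$, uniformly over data sharing an envelope. Now, given $\phi\in H^{1,0}$, approximate it by smooth $\phi_n$ --- say by Littlewood--Paley truncation in $\xi$, which lands in $H^{\infty,0}$ and is dense there: the $u_n=S_T^\infty\phi_n$ live on a common $[-T,T]$, share an envelope, are Cauchy in $C([-T,T]:H^{1-\delta,0})$ by the difference estimate, and have uniformly small high-frequency tails; splitting $\norm{u_n-u_m}_{H^{1,0}}$ into its piece below frequency $2^j$ (handled by the $H^{1-\delta,0}$ difference) and its piece above (handled by the tails) shows $(u_n)$ is Cauchy in $C([-T,T]:H^{1,0})$. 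The limit defines $S_T^s\phi$, independent of the approximating sequence, and the same two ingredients make $\phi\mapsto S_T^s\phi$ continuous on $H^{1,0}$ and equal to $S_T^\infty$ on $H^{\infty,0}$.

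I expect the real difficulty to be concentrated in the bilinear and trilinear estimates in the regime $\abs{\mu}\gtrsim\abs{\xi}^2$: there the KP-I resonance function can be small in configurations that would be strongly non-resonant for KP-II, so the usual modulation gain is unavailable, and since $H^{1,0}$ carries no $\mu$-weight one must instead distribute the large symbol $\mu^2/\xi$ carefully among the interacting frequencies and lean on the frequency-dependent time windows. Making this work is precisely what allows one to drop the condition $\partial_x^{-1}\partial_y\phi\in L^2$ of $\E^1$ and to go below the Kenig--Ponce--Vega-type threshold of \cite{MoSaTz4}.
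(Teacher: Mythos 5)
Your framework and part (a) are essentially the paper's: the same scaling reduction to small $H^{1,0}$ data on $[-1,1]$, smooth local solutions from \cite{MoSaTz4}, and the closed system of linear ($F^s\les E^s+N^s$), short-time bilinear, and energy estimates run through a continuity/bootstrap argument, with the $\sigma\geq 1$ persistence coming from the bilinearity in the $F^\sigma$ factor. Two corrections on the setup, though. First, no extra component of the resolution space adapted to the region $\abs{\mu}\gtrsim\abs{\xi}^2$ is needed: the paper's $X_k$, $F_k$, $N_k$ are built only from dyadic localization in $\xi$ and in modulation, with time scale exactly $2^{-k}$ at $\xi$-frequency $2^k$, and the symmetric estimates of Lemma \ref{lem:3zest} handle the KP-I resonance as is; in fact the spaces here are \emph{simpler} than in \cite{IKT} (no homogeneous low-frequency decomposition), precisely because $H^{s,0}$ data lets one treat the problem as quasi one-dimensional. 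Second, your Gronwall uniqueness uses $\norm{\partial_x(u_1+u_2)}_{L^\infty_{x,y}}$, which is not controlled by $H^{\infty,0}$ norms (there is no $y$-regularity); uniqueness of the smooth solutions should instead be quoted from the $H^{s,0}$, $s>3/2$, theory of Theorem \ref{thmmst}, as the paper does.

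For part (b) you take a genuinely different route from the paper. The paper runs a Bona--Smith argument: it proves the Lipschitz bound $\norm{u_1-u_2}_{F^0}\les\norm{\phi_1-\phi_2}_{L^2}$ and the $H^{1,0}$ difference bound with the loss term $\norm{\phi_1}_{H^{2,0}}\norm{\phi_1-\phi_2}_{L^2}$ (Proposition \ref{prop:energydiff}), and applies them to the truncations $\phi_n^K=P_{\leq K}\phi_n$, whose $H^{2,0}$ norms pay for the loss. You propose frequency envelopes for uniform tail control plus a difference estimate in a weaker norm; this is a standard and workable alternative, at the cost of reproving the bilinear and energy estimates in envelope form, whereas Bona--Smith reuses the estimates already established. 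One genuine inaccuracy in your version: the claimed Lipschitz difference bound in $H^{1-\delta,0}$ is too strong. In the energy estimate for $w=u-v$ at regularity $s$, the term where $\partial_x$ falls on the high-frequency factor of $u+v$ against a low-frequency $w$ gives, after Lemma \ref{lem:3linear}(a), a bound of the shape $\norm{w}_{F^s}\norm{w}_{F^0}\norm{u+v}_{F^{s+1}}$; the $s$ derivatives cannot be moved onto the low-frequency $w$, so the same loss that produces the paper's $H^{2,0}$-weighted term at $s=1$ occurs for every $s>0$, and only $s=0$ is loss-free. This does not sink your scheme --- the $L^2$ Lipschitz bound together with your envelope tail control (or interpolation, which yields only H\"older continuity in $H^{1-\delta,0}$) still gives the Cauchy property in $H^{1,0}$ --- but the difference estimate as you stated it would fail, and the "perhaps up to $s=1$" is ruled out by the very non-smoothness of the flow map that forces this circle of ideas.
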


We sketch the proof of the main theorem. Since the direct iterative
methods can not work for the data in $H^{s_1,s_2}$, one then need to
use some other less perturbative methods. We will use the methods of
Ionescu, Kenig and Tataru \cite{IKT} in this paper. It is known that
one can gain one-half derivative by direct using of Bourgain space
$X^{s,b}$ (see \cite{MoSaTz1}, also see Section 3 below) associated
to the initial data $H^{s,0}$. Thus in order to gain one derivative,
we use the short-time $X^{s,b}$-structure. The time scale used in
this paper seems to be optimal in the sense that it just suffices to
control the high-low interaction, see \cite{GuodgBO} for a
discussion of the optimal time scale. The next step is to prove an
energy estimates which are used to control the rest. Finally using
the arguments in \cite{IKT} and Bona-Smith methods \cite{BonaSmith},
we are able to prove the continuity of the solution mapping. The
arguments here are less complicated than those in \cite{IKT}, since
we don't have the difficulty from the low frequency part and can
just treat the equation as if it is of spatial dimension one.

It would be also very interesting if one can remove the condition
$\partial_x \phi \in L^2(\R^2)$ and keep the condition
$\partial_x^{-1}\partial_y \phi \in L^2(\R^2)$. Actually, the
arguments in \cite{IKT} have proved better results than the energy
space $\E^1$. For example, the whole arguments actually work for the
initial data belonging to a larger space $\E^\sigma=\{\phi\in
L^2(\R^2),|\partial_x|^{\sigma} \phi \in L^2(\R^2),
\partial_x^{-1}\partial_y \phi \in L^2(\R^2)\}$ for some $\sigma<1$.
But they may not work for $\E^0$.

The rest of the paper is organized as following. In Section 2 we
present some notations and Banach function spaces. We present a
symmetric estimtes in Section 3 and use them to prove some dyadic
bilinear estimates in Section 4. The proof of Theorem \ref{thmmain}
is given in Section 5 via some energy estimates which are proved in
Section 6.

\section{Notation and Definitions}

For $x, y\in \R^+$, $x\les y$ means that there exists $C>0$ such
that $x\leq Cy$. By $x\sim y$ we mean $x\les y$ and $y\les x$. For
$f\in \Sch'$ we denote by $\widehat{f}$ or $\ft (f)$ the Fourier
transform of $f$ for both spatial and time variables,
\begin{eqnarray*}
\widehat{f}(\xi,\mu,\tau)=\int_{\R^3}e^{-ix \xi}e^{-iy\mu}e^{-it
\tau}f(x,y,t)dxdydt.
\end{eqnarray*}
Moreover, we use $\ft_{x,y}$ and $\ft_t$ to denote the Fourier
transform with respect to space and time variable respectively. Let
$\Z_+=\Z\cap[0, \infty)$. Let $I_{\leq 0}=\{\xi: |\xi|<3/2\}$,
$\wt{I}_{\leq 0}=\{\xi:|\xi|\leq 2\}$. For $k\in \Z$ let
\[I_k=\{\xi:|\xi|\in [(3/4)\cdot 2^k,
(3/2)\cdot 2^k)\ \}\mbox{ and }\widetilde{I}_k=\{\xi: |\xi|\in
[2^{k-1}, 2^{k+1}]\ \}.\]

Let $\eta_0: \R\rightarrow [0, 1]$ denote an even smooth function
supported in $[-8/5, 8/5]$ and equal to $1$ in $[-5/4, 5/4]$. For
$k\in \Z$ let $\chi_k(\xi)=\eta_0(\xi/2^k)-\eta_0(\xi/2^{k-1})$ and
\[\chi_{[k_1,k_2]}=\sum_{k=k_1}^{k_2}\chi_k \mbox{ for any } k_1\leq k_2\in \Z.\]
For simplicity, let $\eta_k=\chi_k$ if $k\geq 1$ and $\eta_k\equiv
0$ if $k\leq -1$. Also, for $k_1\leq k_2\in \Z$ let
\[\eta_{[k_1,k_2]}=\sum_{k=k_1}^{k_2}\eta_k \mbox{ and }\eta_{\leq k_2}=\sum_{k=-\infty}^{k_2}\eta_{k}.\]
Roughly speaking, $\{\chi_k\}_{k\in \mathbb{Z}}$ is the homogeneous
decomposition function sequence and $\{\eta_k\}_{k\in \mathbb{Z}_+}$
is the non-homogeneous decomposition function sequence to the
frequency space. For $k\in \Z$ let $P_k$ denote the operators on
$L^2(\R^2)$ defined by
$\widehat{P_ku}(\xi)=1_{I_k}(\xi)\widehat{u}(\xi,\mu)$. By a slight
abuse of notation we also define the operators $P_k$ on
$L^2(\R^2\times \R)$ by formulas $\ft(P_ku)(\xi,\mu,
\tau)=1_{I_k}(\xi)\ft(u)(\xi,\mu, \tau)$. For $l\in \Z$ let
\[
P_{\leq l}=\sum_{k\leq l}P_k, \quad P_{\geq l}=\sum_{k\geq l}P_k.
\]

For $x\in \R$, let $[x]$ be the largest integer that is less or
equal to $x$ and denote $\jb{x}=(1+|x|^2)^{1/2}$. Let $a_1, a_2,
a_3\in \R$. It will be convenient to define the quantities
$a_{max}\geq a_{med}\geq a_{min}$ to be the maximum, median, and
minimum of $a_1,a_2,a_3$ respectively. Usually we use $k_1,k_2,k_3$
and $j_1,j_2,j_3$ to denote integers, $N_i=2^{k_i}$ and
$L_i=2^{j_i}$ for $i=1,2,3$ to denote dyadic numbers.

For $(\xi,\mu)\in \R \setminus \{0\}\times \R$ let
\begin{equation}\label{eq:dr}
\dr(\xi,\mu)=\xi^3+\mu^2/\xi.
\end{equation}
which is the dispersion relation associated to KP-I equation
\eqref{eq:kpI}. For $\phi \in L^2(\R^2)$ let $W(t)\phi\in C(\R:
L^2)$ denote the solution of the free KP-I evolution given by
\begin{eqnarray*}
\ft_{x,y}[W(t)\phi](\xi,\mu,t)=e^{it\omega(\xi,\mu)}\widehat{\phi}(\xi,\mu).
\end{eqnarray*}
For $k,j \in \Z_+$ let \[D_{k,j}=\{(\xi,\mu, \tau)\in \R^3: \xi \in
\widetilde{I}_k, \tau-\omega(\xi,\mu)\in \widetilde{I}_j\},\quad
D_{k,\leq j}=\cup_{l\leq j}D_{k,l}.\]Similarly we define $D_{\leq
k,j}$ and $D_{\leq k,\leq j}$. For $k\in \Z_+$ we define the dyadic
$X^{s,b}$-type normed spaces $X_k(\R^3)$:
\begin{eqnarray*}
&X_k=\left\{
\begin{array}{l}
f\in L^2(\R^3): f(\xi,\mu,\tau) \mbox{ is supported in } \widetilde
{I}_k\times\R^2 \mbox{ ($\wt{I}_{\leq 0}\times \R^2$ if $k=0$)}\\
\mbox{ and }\norm{f}_{X_k}:=\sum_{j=0}^\infty
2^{j/2}\norm{\eta_j(\tau-w(\xi,\mu))\cdot
f(\xi,\mu,\tau)}_{L^2_{\xi,\mu,\tau}}<\infty
\end{array}
\right\}.&
\end{eqnarray*}
These $l^1$-type $X^{s,b}$ structures were first introduced in
\cite{Tata}. Our resolution space is a little different from those
in \cite{IKT}. We do not perform the homogeneous decomposition on
the low frequency. In this way we can avoid some logarithmic
divergence.

The definition shows easily that if $k\in \Z_+$ and $f_k\in X_k$
then
\begin{eqnarray}\label{eq:pXk1}
\normo{\int_{\R}|f_k(\xi,\mu,\tau')|d\tau'}_{L_\xi^2}\les
\norm{f_k}_{X_k}.
\end{eqnarray}
Moreover, it is easy to see (see \cite{IKT}, see \cite{GuodgBO} for
a proof) that if $k\in \Z_+$, $l\in \Z_+$, and $f_k\in X_k$ then
\begin{eqnarray}\label{eq:pXk2}
&&\sum_{j=l+1}^\infty
2^{j/2}\normo{\eta_j(\tau-\omega(\xi,\mu))\cdot
\int_{\R}|f_k(\xi,\mu,\tau')|\cdot
2^{-l}(1+2^{-l}|\tau-\tau'|)^{-4}d\tau'}_{L^2}\nonumber\\
&&+2^{l/2}\normo{\eta_{\leq l}(\tau-\omega(\xi,\mu)) \int_{\R}
|f_k(\xi,\mu,\tau')|
2^{-l}(1+2^{-l}|\tau-\tau'|)^{-4}d\tau'}_{L^2}\nonumber\\
&&\les \norm{f_k}_{X_k}.
\end{eqnarray}
In particular, if $k,\ l\in \Z_+$, $t_0\in \R$, $f_k \in X_k$ and
$\gamma\in \Sch(\R)$, then
\begin{eqnarray}\label{eq:pXk3}
\norm{\ft[\gamma(2^l(t-t_0))\cdot \ft^{-1}(f_k)]}_{X_k}\les
\norm{f_k}_{X_k}.
\end{eqnarray}

As in \cite{IKT} at frequency $2^k$ we will use the $X^{s, b}$
structure given by the $X_k$ norm, uniformly on the $2^{-k}$ time
scale. For $k\in \Z_+$ we define the normed spaces
\begin{eqnarray*}
&& F_k=\left\{
\begin{array}{l}
f\in L^2(\R^3): \widehat{f}(\xi,\mu,\tau) \mbox{ is supported in }
\widetilde{I}_k\times\R^2 \mbox{ ($\wt{I}_{\leq 0}\times \R^2$ if $k=0$)} \\
\mbox{ and }\norm{f}_{F_k}=\sup\limits_{t_k\in \R}\norm{\ft[f\cdot
\eta_0(2^{k}(t-t_k))]}_{X_k}<\infty
\end{array}
\right\},
\\
&&N_k=\left\{
\begin{array}{l}
f\in L^2(\R^3): \supp \widehat{f}(\xi,\mu,\tau) \subset
\widetilde{I}_k\times\R^2 \mbox{ ($\wt{I}_{\leq 0}\times \R^2$ if $k=0$)} \mbox{ and } \\
\norm{f}_{N_k}=\sup\limits_{t_k\in
\R}\norm{(\tau-\dr(\xi,\mu)+i2^{k})^{-1}\ft[f\cdot
\eta_0(2^{k}(t-t_k))]}_{X_k}<\infty
\end{array}
\right\}.
\end{eqnarray*}
We see from the definitions that we still use $X^{s,b}$ structure on
the whole interval for the low frequency. We define then local
versions of the spaces in standard ways. For $T\in (0,1]$ we define
the normed spaces
\begin{eqnarray*}
F_k(T)&=&\{f\in C([-T,T]:L^2): \norm{f}_{F_k(T)}=\inf_{\wt{f}=f
\mbox{ in } \R\times [-T,T]}\norm{\wt f}_{F_k}\};\\
N_k(T)&=&\{f\in C([-T,T]:L^2): \norm{f}_{N_k(T)}=\inf_{\wt{f}=f
\mbox{ in } \R\times [-T,T]}\norm{\wt f}_{N_k}\}.
\end{eqnarray*}
We assemble these dyadic spaces in a Littlewood-Paley manner. For
$s\geq 0$ and $T\in (0,1]$, we define the normed spaces
\begin{eqnarray*}
&&F^{s}(T)=\left\{ u:\
\norm{u}_{F^{s}(T)}^2=\sum_{k=1}^{\infty}2^{2sk}\norm{P_k(u)}_{F_k(T)}^2+\norm{P_{\leq
0}(u)}_{F_0(T)}^2<\infty \right\},
\\
&&N^{s}(T)=\left\{ u:\
\norm{u}_{N^{s}(T)}^2=\sum_{k=1}^{\infty}2^{2sk}\norm{P_k(u)}_{N_k(T)}^2+\norm{P_{\leq
0}(u)}_{N_0(T)}^2<\infty \right\}.
\end{eqnarray*}
We define the dyadic energy space. For $s\geq 0$ and $u\in
C([-T,T]:H^{\infty,0})$ we define
\begin{eqnarray*}
\norm{u}_{E^{s}(T)}^2=\norm{P_{\leq 0}(u(0))}_{L^2}^2+\sum_{k\geq
1}\sup_{t_k\in [-T,T]}2^{2sk}\norm{P_k(u(t_k))}_{L^2}^2.
\end{eqnarray*}
As in \cite{IKT}, for any $k\in \Z_+$ we define the set $S_k$ of
$k-acceptable$ time multiplication factors
\begin{eqnarray}
S_k=\{m_k:\R\rightarrow \R: \norm{m_k}_{S_k}=\sum_{j=0}^{10}
2^{-k}\norm{\partial^jm_k}_{L^\infty}< \infty\}.
\end{eqnarray}
For instance, $\eta(2^{k}t) \in S_k$ for any $\eta$ satisfies
$\norm{\partial_x^j \eta}_{L^\infty}\leq C$ for $j=0,1,2,\ldots,
10$. Direct estimates using the definitions and \eqref{eq:pXk2} show
that for any $s\geq 0$ and $T\in (0,1]$
\begin{eqnarray}\label{eq:Sk}
\left \{
\begin{array}{l}
\norm{\sum_{k\in \Z_+} m_k(t)\cdot P_k(u)}_{F^{s}(T)}\les (\sup_{k\in \Z_+}\norm{m_k}_{S_k})\cdot \norm{u}_{F^{s}(T)};\\
\norm{\sum_{k\in \Z_+} m_k(t)\cdot P_k(u)}_{N^{s}(T)}\les
(\sup_{k\in \Z_+}\norm{m_k}_{S_k})\cdot \norm{u}_{N^{s}(T)};\\
\norm{\sum_{k\in \Z_+} m_k(t)\cdot P_k(u)}_{E^{s}(T)}\les
(\sup_{k\in \Z_+}\norm{m_k}_{S_k})\cdot \norm{u}_{E^{s}(T)}.
\end{array}
\right.
\end{eqnarray}
We refer the readers to \cite{GuodgBO} for some detailed proof.

\section{$L^2$ bilinear estimates}\label{L2bi}

In this section we prove some symmetric estimates by following some
ideas in \cite{IKT} and \cite{MoSaTz1}. These estimates will be used
to prove bilinear estimates in the next section.

\begin{lemma}\label{lem:3zest}
(a) Assume $k_1,k_2,k_3\in \Z$, $j_1,j_2,j_3\in\Z_+$, and
$f_i:\R^3\to\R_+$ are $L^2$ functions supported in $D_{k_i,\leq
j_i}$, $i=1,2,3$. Then
\begin{eqnarray}\label{eq:3zesta}
\int_{\R^3}(f_1\ast f_2)\cdot f_3\lesssim
2^{\frac{j_1+j_2+j_3}{2}}\min(2^{-\frac{k_1+k_2+k_3}{2}},2^{-\frac{j_{max}}{2}})
\|f_1\|_{L^2}\|f_2\|_{L^2}\|f_3\|_{L^2}.
\end{eqnarray}

(b) Assume $k_1,k_2,k_3\in \Z$ with $k_2\geq 20$, $|k_2-k_3|\leq 5$
and $k_1\leq k_2-10$, $j_1,j_2,j_3\in\Z_+$, and $f_i:\R^3\to\R_+$
are $L^2$ functions supported in $D_{k_i,\leq j_i}$, $i=1,2,3$. Then
if $j_{max}\leq k_1+k_2+k_3-20$,
\begin{eqnarray}\label{eq:3zestb1}
\int_{\R^3}(f_1\ast f_2)\cdot f_3\lesssim   2^{(j_1+j_2)/2}
2^{-k_3/2}2^{k_1/2}\|f_1\|_{L^2}\|f_2\|_{L^2}\|f_3\|_{L^2};
\end{eqnarray}
or else if $j_{max}\geq k_1+k_2+k_3-20$,
\begin{eqnarray}\label{eq:3zestb2}
\int_{\R^3}(f_1\ast f_2)\cdot f_3\lesssim
2^{(j_1+j_2)/2}2^{j_{max}/4}
2^{-k_3}2^{k_1/4}\|f_1\|_{L^2}\|f_2\|_{L^2}\|f_3\|_{L^2}.
\end{eqnarray}
\end{lemma}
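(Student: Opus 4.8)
The plan is to reduce the trilinear convolution estimate to a pointwise bound on the measure of certain sublevel sets, following the standard "resonance function" strategy used for KP-type equations. Writing $\zeta_i=(\xi_i,\mu_i,\tau_i)$, by duality and the support hypotheses the left-hand side of each inequality is
\[
\int_{\zeta_1+\zeta_2+\zeta_3=0}\!\! f_1(\zeta_1)f_2(\zeta_2)f_3(-\zeta_1-\zeta_2)\,d\zeta_1 d\zeta_2,
\]
(up to relabelling a reflection), and Cauchy--Schwarz in the two free variables reduces matters to controlling $\sup|E|$ where $E$ is the set of $(\zeta_1,\zeta_2)$ in the relevant product of slabs $D_{k_i,\le j_i}$. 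The key algebraic identity is the resonance relation: on the set $\xi_1+\xi_2+\xi_3=0$, $\tau_1+\tau_2+\tau_3=0$ one has
\[
(\tau_1-\dr(\xi_1,\mu_1))+(\tau_2-\dr(\xi_2,\mu_2))+(\tau_3-\dr(\xi_3,\mu_3))=-\big(\dr(\xi_1,\mu_1)+\dr(\xi_2,\mu_2)+\dr(\xi_3,\mu_3)\big)=:H,
\]
so that $|H|\lesssim 2^{j_{max}}$ automatically, and one computes $H$ explicitly from $\dr(\xi,\mu)=\xi^3+\mu^2/\xi$. For part (a), I would first prove the universal bound $\int (f_1*f_2)f_3\lesssim 2^{(j_1+j_2+j_3)/2}\min(2^{-(k_1+k_2+k_3)/2},\ldots)$ by: (i) fixing $(\xi_1,\mu_1,\tau_1)$ and $(\xi_2,\mu_2,\tau_2)$ as the free variables, and integrating out; the $\xi$-integration contributes $2^{-k_{min}/2}$-type gains because the map $\xi\mapsto$ (third-frequency constraint) has controlled Jacobian once two $\xi$'s are fixed — actually here one gets $2^{-k_{max}/2}$ only after using the worst constraint, which after symmetrisation gives the symmetric factor $2^{-(k_1+k_2+k_3)/2}$ in the obvious regime; (ii) alternatively bounding $|E|\lesssim 2^{j_1+j_2}\cdot 2^{j_{max}}\cdot(\text{a line measure})$ when one uses the $\tau$-constraint to solve for a $\mu$-variable, giving the $2^{-j_{max}/2}$ alternative. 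The two bounds are then combined by taking the minimum.

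For part (b), the point is the low-high-high interaction with $k_1\le k_2-10$, $|k_2-k_3|\le 5$, where a better-than-trivial estimate is available. Here I would take $\zeta_1$ (the low-frequency variable) and one of the high variables, say $\zeta_2$, as free. The resonance function simplifies: with $\xi_3=-\xi_1-\xi_2$ and $|\xi_1|\ll|\xi_2|\sim|\xi_3|$, a computation shows $H=3\xi_1\xi_2\xi_3+\big(\text{quadratic form in }\mu_1,\mu_2,\mu_3\big)$, and the $\mu$-part, when one solves $\partial_{\mu_?}H$, is nondegenerate of size $\sim 2^{-k_2}$ or so — this is exactly where the factor $2^{-k_3/2}$ (resp. $2^{-k_3}$) will come from. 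Concretely, freezing $\xi_1,\xi_2,\mu_1$ and using the $\tau$-equation together with the constraint $\tau_3-\dr\in\wt I_{j_3}$, one solves for $\mu_2$ in terms of the rest, with Jacobian $|\partial_{\mu_2}H|$; when $j_{max}\le k_1+k_2+k_3-20$ this derivative is forced to be $\sim 2^{k_1+k_2+k_3}/2^{k_2}\sim 2^{k_1+k_3}$ in magnitude (the would-be degenerate locus is excluded by the size of $H$), yielding \eqref{eq:3zestb1}; when $j_{max}$ is large one cannot exclude degeneracy and instead pays a $2^{j_{max}/4}2^{-k_3/2}$-type loss from an Archimedean/van der Corput-type bound on a one-dimensional sublevel set of a quadratic, giving \eqref{eq:3zestb2}.

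The main obstacle is the bookkeeping in part (b): one must carefully identify which two of the six scalar variables to integrate out first, verify that the resonance function's relevant partial derivative is genuinely nonvanishing (and of the claimed size) off a negligible set, and handle the degenerate sublevel-set contribution with a quadratic van der Corput estimate of the form $|\{s:|as^2+bs+c|\le\rho\}|\lesssim\min(\rho^{1/2}|a|^{-1/2},\rho|b|^{-1})$. I expect the cleanest route is: first establish \eqref{eq:3zesta} in full generality (this is essentially the KP-I analogue of the standard bilinear $L^2$ bound and is driven purely by Cauchy--Schwarz plus the elementary observation that a line segment has bounded length and a parabola arc has controlled measure of sublevel sets), and then deduce (b) by re-running the same argument but keeping track of the extra smallness $2^{k_1}$ coming from the low frequency and the extra largeness $2^{k_3}$ in the denominators, using the hypothesis $j_{max}\lessgtr k_1+k_2+k_3-20$ to decide whether the resonance identity forces the Jacobian to be large (case \eqref{eq:3zestb1}) or whether one must fall back on the quadratic sublevel-set bound (case \eqref{eq:3zestb2}).
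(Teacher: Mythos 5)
Your overall strategy (remove the modulations, reduce by Cauchy--Schwarz to a bilinear estimate against $g(\xi_1+\xi_2,\mu_1+\mu_2,\Omega)$, then control a Jacobian attached to the resonance function) is the same as the paper's, but two of the quantitative claims on which your argument rests are wrong, and the second case of part (b) is not reachable by the tools you name. In case \eqref{eq:3zestb1} you assert $|\partial_{\mu_2}H|\sim 2^{k_1+k_3}$. Computing from $\Omega=\frac{-\xi_1\xi_2}{\xi_1+\xi_2}\bigl[3(\xi_1+\xi_2)^2-(\mu_1/\xi_1-\mu_2/\xi_2)^2\bigr]$ gives $\partial_{\mu_2}\Omega=\frac{-2\xi_1}{\xi_1+\xi_2}\bigl(\mu_1/\xi_1-\mu_2/\xi_2\bigr)$, and the hypothesis $j_{max}\le k_1+k_2+k_3-20$ forces $|\mu_1/\xi_1-\mu_2/\xi_2|\sim|\xi_1+\xi_2|\sim 2^{k_3}$, so in fact $|\partial_{\mu_2}\Omega|\sim 2^{k_1}$: you are off by a factor $2^{k_3}$. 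Moreover a sup-of-measure bound on the $\mu_2$-slice cannot close the argument, since $\mu_1$ is completely unconstrained and the relevant fibers have infinite measure; one needs a genuine $L^2$ change of variables. The quantity that actually drives the estimate is the Jacobian of $(\mu_1,\mu_2)\mapsto(\mu_1+\mu_2,\Omega)$, namely $|\partial_{\mu_1}\Omega-\partial_{\mu_2}\Omega|=2|\mu_1/\xi_1-\mu_2/\xi_2|\sim 2^{k_3}$, paired with the residual $\xi_1$-measure $2^{k_1}$, which is exactly how the factor $2^{k_1/2}2^{-k_3/2}$ arises; the paper implements this through the substitutions $\mu_1=\sqrt{3}\xi_1^2+\beta_1\xi_1$, $\mu_2=-\sqrt{3}\xi_2^2+\beta_2\xi_2$, $\beta=\beta_1-\beta_2$, after first restricting to the near-resonant set.

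For \eqref{eq:3zestb2}, a one-dimensional quadratic sublevel bound $|\{s:|as^2+bs+c|\le\rho\}|\lesssim\rho^{1/2}|a|^{-1/2}$ applied in a single $\mu$-variable produces, after Cauchy--Schwarz, a factor of the form $\bigl(2^{j_{max}}2^{2k_2-k_1}\bigr)^{1/4}$, and your own stated outcome, a ``$2^{j_{max}/4}2^{-k_3/2}$-type loss,'' is strictly weaker than the required $2^{j_{max}/4}2^{k_1/4}2^{-k_3}$ (since $k_1\le k_2-10$ one has $2^{k_1/4}2^{-k_3}\le 2^{-k_3/2}\cdot 2^{-k_3/4}$), so even carried out in full your sketch would not prove \eqref{eq:3zestb2}. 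The paper instead changes variables in the pair $(\xi_1,\mu_1)$ to $(\alpha,\beta)=\bigl(\Omega,\,3\xi_2\xi_1(\xi_2-\xi_1)\bigr)$, with Jacobian $d\xi_1\,d\mu_1=|\beta|^{1/2}d\alpha\,d\beta\big/\bigl(|\xi_2|^{3/2}|\tfrac34\xi_2^3-\beta|^{1/2}|\beta+\alpha|^{1/2}\bigr)$, and crucially exploits \emph{both} constraints $|\beta|\lesssim 2^{j_{max}}$ and $|\beta|\lesssim 2^{k_1+2k_2}$ to extract $2^{k_1/4}2^{j_{max}/4}$, the full $2^{-k_3}$ coming from the $|\xi_2|^{-3/2}|\tfrac34\xi_2^3-\beta|^{-1/2}$ factors. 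Nothing in your proposal produces the low-frequency gain $2^{k_1/4}$ in this regime, and that gain is what the applications in Section 4 rely on.
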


\begin{proof}
Part (a) follows immediately from Lemma 5.1 (a) and Lemma 5.2 in
\cite{IKT}. We only need to prove part (b). Define
$f_i^\#(\xi,\mu,\theta)=f_i(\xi,\mu,\theta+\omega(\xi,\mu))$,
$i=1,2,3$, then we see $\|f_i^\#\|_{L^2}=\|f_i\|_{L^2}$ and the
functions $f_i^\#$ are supported in $\{\xi,\mu,\theta):\,\xi\in
\widetilde{I}_{k_i},\,\mu\in\R,\,|\theta|\leq 2^{j_i}\}$. We rewrite
the left-hand side of \eqref{eq:3zestb1}
\begin{eqnarray*}
&&\int_{\R^6}f_1^\#(\xi_1,\mu_1,\theta_1)\cdot f_2^\#(\xi_2,\mu_2,\theta_2)\\
&&\times
f_3^\#(\xi_1+\xi_2,\mu_1+\mu_2,\theta_1+\theta_2+\Omega((\xi_1,\mu_1),(\xi_2,\mu_2)))\,d\xi_1d\xi_2d\mu_1d\mu_2d\theta_1d\theta_2,
\end{eqnarray*}
where
\begin{eqnarray*}
\Omega((\xi_1,\mu_1),(\xi_2,\mu_2))&=&-\omega(\xi_1+\xi_2,\mu_1+\mu_2)+\omega(\xi_1,\mu_1)+\omega(\xi_2,\mu_2)\\
&=&\frac{-\xi_1\xi_2}{\xi_1+\xi_2}\Big[(\sqrt{3}\xi_1+\sqrt{3}\xi_2)^2-\Big(\frac{\mu_1}{\xi_1}-\frac{\mu_2}{\xi_2}\Big)^2\Big]
\end{eqnarray*}
is the resonance function for KP-I equation. For the KP-II equation,
the resonance function is
$\frac{-\xi_1\xi_2}{\xi_1+\xi_2}\Big[(\sqrt{3}\xi_1+\sqrt{3}\xi_2)^2+\Big(\frac{\mu_1}{\xi_1}-\frac{\mu_2}{\xi_2}\Big)^2\Big]$
instead, thus we see why KP-II is easier to handle.

We assume first that $j_{max}\leq k_1+k_2+k_3-20$. For this case, we
reproduce part of the proof of Lemma 5.1 (a) in \cite{IKT}. We will
prove that if $g_i:\R^2\to\R_+$ are $L^2$ functions supported in
$\widetilde{I}_{k_i}\times\R$, $i=1,2$, and $g:\R^3\to\R_+$ is an
$L^2$ function supported in $\widetilde{I}_{k_3}\times\R\times
[-2^{j_{max}+5},2^{j_{max}+5}]$, then
\begin{eqnarray}\label{eq:3zestbprf1}
&&\int_{\R^4}g_1(\xi_1,\mu_1) g_2(\xi_2,\mu_2)g(\xi_1+\xi_2,\mu_1+\mu_2,\Omega((\xi_1,\mu_1),(\xi_2,\mu_2)))\,d\xi_1d\xi_2d\mu_1d\mu_2\nonumber\\
&&\lesssim
2^{k_1/2}2^{-k_3/2}\cdot\|g_1\|_{L^2}\|g_2\|_{L^2}\|g\|_{L^2}.
\end{eqnarray}
This suffices for \eqref{eq:3zestb1}, combined with Cauchy-Schwartz
inequality.

To prove \eqref{eq:3zestbprf1}, we may assume that the integral in
the left-hand side of \eqref{eq:3zestbprf1} is taken over the set
(there are four identical integrals of this type)
\begin{equation*}
\mathcal{R}_{++}=\{(\xi_1,\mu_1,\xi_2,\mu_2):\,\xi_1+\xi_2\geq
0\text{ and }\mu_1/ \xi_1-\mu_2/ \xi_2\geq 0\}.
\end{equation*}
From the assumption $j\leq k_1+k_2+k-20$, we may assume that the
integral in the left-hand side of \eqref{eq:3zestbprf1} is taken
over the set
\begin{equation*}
\widetilde{\mathcal{R}}_{++}=\{(\xi_1,\mu_1,\xi_2,\mu_2)\in\mathcal{R}_{++}:|\sqrt{3}(\xi_1+\xi_2)|-|\mu_1/
\xi_1-\mu_2/ \xi_2|\leq 2^{-10}|\xi_1+\xi_2|\}.
\end{equation*}
To summarize, it suffices to prove that
\begin{eqnarray}\label{eq:3zestbprf2}
&&\int_{\widetilde{\mathcal{R}}_{++}}g_1(\xi_1,\mu_1)
g_2(\xi_2,\mu_2)
g(\xi_1+\xi_2,\mu_1+\mu_2,\Omega((\xi_1,\mu_1),(\xi_2,\mu_2)))\,d\xi_1d\xi_2d\mu_1d\mu_2\nonumber\\
&&\lesssim 2^{k_1/2}2^{-k_3/2}
\cdot\|g_1\|_{L^2}\|g_2\|_{L^2}\|g\|_{L^2}.
\end{eqnarray}
By the changes of variables
\begin{equation*}
\mu_1=\sqrt{3}\xi_1^2+\beta_1\xi_1\text{ and
}\mu_2=-\sqrt{3}\xi_2^2+\beta_2\xi_2,
\end{equation*}
with $d\mu_1d\mu_2=\xi_1\xi_2\,d\beta_1d\beta_2$. The left-hand side
of \eqref{eq:3zestbprf2} is bounded by
\begin{eqnarray*}
&&C2^{k_1+k_2}\int_{S}
g(\xi_1+\xi_2,\sqrt{3}\xi_1^2-\sqrt{3}\xi_2^2+\beta_1\xi_1+\beta_2\xi_2,\widetilde{\Omega}((\xi_1,\beta_1),(\xi_2,\beta_2)))\nonumber\\
&&\quad \quad g_1(\xi_1,\sqrt{3}\xi_1^2+\beta_1\xi_1)\cdot
g_2(\xi_2,-\sqrt{3}\xi_2^2+\beta_2\xi_2)d\xi_1d\xi_2d\beta_1d\beta_2,
\end{eqnarray*}
where
\begin{eqnarray*}
S=\{(\xi_1,\beta_1,\xi_2,\beta_2):\xi_1+\xi_2\geq 0\text{ and
}|\beta_1-\beta_2|\leq 2^{-10}(\xi_1+\xi_2)\},
\end{eqnarray*}
and
\begin{eqnarray*}
\widetilde{\Omega}((\xi_1,\beta_1),(\xi_2,\beta_2))=\xi_1\xi_2
(\beta_1-\beta_2)\Big(2\sqrt{3}+\frac{\beta_1-\beta_2}{\xi_1+\xi_2}\Big).
\end{eqnarray*}
Let
$h_1(\xi_1,\beta_1)=2^{k_1/2}g_1(\xi_1,\sqrt{3}\xi_1^2+\beta_1\xi_1)$,
$h_2(\xi_2,\beta_2)=2^{k_2/2}g_2(\xi_2,-\sqrt{3}\xi_2^2+\beta_2\xi_2)$,
then $\|h_i\|_{L^2}\approx\|g_i\|_{L^2}$. Thus, for
\eqref{eq:3zestbprf2} it suffices to prove that
\begin{eqnarray}\label{eq:3zestbprf6}
&&2^{(k_1+k_2)/2}\int_{S}g(\xi_1+\xi_2,\sqrt{3}\xi_1^2-\sqrt{3}\xi_2^2+\beta_1\xi_1+\beta_2\xi_2,\widetilde{\Omega}((\xi_1,\beta_1),(\xi_2,\beta_2)))\nonumber\\
&&\cdot h_1(\xi_1,\beta_1)\cdot
h_2(\xi_2,\beta_2)d\xi_1d\xi_2d\beta_1d\beta_2\les
2^{k_1/2}2^{-k_3/2}\cdot\|h_1\|_{L^2}\|h_2\|_{L^2}\|g\|_{L^2}.
\end{eqnarray}
By the change of variables $\beta_1=\beta_2+\beta$, we get that the
left-hand side of \eqref{eq:3zestbprf6} is bounded by
\begin{eqnarray}\label{eq:3zestbprf7}
&&\int_{\widetilde{S}}g(\xi_1+\xi_2,\sqrt{3}\xi_1^2-\sqrt{3}\xi_2^2+\beta
\xi_1+\beta_2(\xi_1+\xi_2),\xi_1\xi_2\beta\cdot
(2\sqrt{3}+\beta/(\xi_1+\xi_2)))\nonumber\\
&&\times 2^{(k_1+k_2)/2}h_1(\xi_1,\beta+\beta_2)\cdot
h_2(\xi_2,\beta_2) d\xi_1d\xi_2d\beta d\beta_2,
\end{eqnarray}
where
$\widetilde{S}=\{(\xi_1,\xi_2,\beta,\beta_2)\in\R^4:\xi_1+\xi_2\geq
0\text{ and }|\beta|\leq 2^{-10}(\xi_1+\xi_2)\}$. Note that in the
area $\widetilde{S}$ we have
\[|\partial_{\beta_2}[\sqrt{3}\xi_1^2-\sqrt{3}\xi_2^2+\beta
\xi_1+\beta_2(\xi_1+\xi_2)]|\sim 2^{k_3}\] and
\[|\partial_{\beta}[\xi_1\xi_2\beta\cdot
(2\sqrt{3}+\beta/(\xi_1+\xi_2))]|\sim 2^{k_3+k_1},\] thus using
Cauchy-Schwartz inequality for $\beta_2, \beta, \xi_2, \xi_1$
respectively, then we get that \eqref{eq:3zestbprf7} is bounded by
\[
2^{k_1/2}2^{-k_3/2}\cdot\|h_1\|_{L^2}\|h_2\|_{L^2}\|g\|_{L^2}\]
which gives the bound \eqref{eq:3zestbprf6}, as desired.

We assume now $j_{max}\geq k_1+k_2+k_3-20$. As the previous case, we
will prove that if $g_i:\R^2\to\R_+$ are $L^2$ functions supported
in $\widetilde{I}_{k_i}\times\R$, $i=1,2$, and $g:\R^3\to\R_+$ is an
$L^2$ function supported in $\widetilde{I}_{k_3}\times\R\times
[-2^{j_{max+5}},2^{j_{max}+5}]$, then
\begin{eqnarray}\label{eq:3zestcprf1}
&&\int_{\R^4}g_1(\xi_1,\mu_1) g_2(\xi_2,\mu_2)g(\xi_1+\xi_2,\mu_1+\mu_2,\Omega((\xi_1,\mu_1),(\xi_2,\mu_2)))\,d\xi_1d\xi_2d\mu_1d\mu_2\nonumber\\
&&\lesssim
2^{-k_3}2^{k_1/4}2^{j_{max}/4}\cdot\|g_1\|_{L^2}\|g_2\|_{L^2}\|g\|_{L^2}.
\end{eqnarray}
This suffices for \eqref{eq:3zestb2}, combined with Cauchy-Schwartz
inequality.

By a change of variable $\xi_2'=\xi_2+\xi_1,\ \mu_2'=\mu_2+\mu_1$ we
get the left-hand side of \eqref{eq:3zestcprf1} is dominated by
\begin{eqnarray}\label{eq:3zestcprf2}
&&\int_{\R^4}g_1(\xi_1,\mu_1)
g_2(\xi_2-\xi_1,\mu_2-\mu_1)\nonumber\\
&&\times \
g(\xi_2,\mu_2,\Omega((\xi_1,\mu_1),(\xi_2-\xi_1,\mu_2-\mu_1)))\,d\xi_1d\xi_2d\mu_1d\mu_2.
\end{eqnarray}
Using Cauchy-Schwartz inequality we get that \eqref{eq:3zestcprf2}
is dominated by
\begin{eqnarray}\label{eq:3zestcprf3}
&&\int_{\R^2}\brk{\int_{\R^2}|g_1(\xi_1,\mu_1)
g_2(\xi_2-\xi_1,\mu_2-\mu_1)|^2d\xi_1 d\mu_1}^{1/2}\nonumber\\
&&\times \brk{\int_{\R^2}
|g(\xi_2,\mu_2,\Omega((\xi_1,\mu_1),(\xi_2-\xi_1,\mu_2-\mu_1)))|^{2}d\xi_1d\mu_1}^{1/2}d\xi_2d\mu_2.
\end{eqnarray}
Make a change of variable
\[\alpha=\Omega((\xi_1,\mu_1),(\xi_2-\xi_1,\mu_2-\mu_1)), \ \beta=3\xi_2\xi_1(\xi_2-\xi_1),\]
then we see $|\alpha|\les 2^{j_{max}},\ |\beta|\les 2^{j_{max}}, \
|\beta|\les 2^{k_1+2k_2}$ and
\[d\xi_1 d\mu_1=\frac{|\beta|^{1/2}d\alpha d\beta}{|\xi_2|^{3/2}|\frac{3\xi_2^3}{4}-\beta|^{1/2}|\beta+\alpha|^{1/2}}.\]
Thus we get
\begin{eqnarray*}
&&\brk{\int_{\R^2}
|g(\xi_2,\mu_2,\Omega((\xi_1,\mu_1),(\xi_2-\xi_1,\mu_2-\mu_1)))|^{2}d\xi_1d\mu_1}^{1/2}\\
&&\les \brk{\int_{\R^2}
|g(\xi_2,\mu_2,\alpha)|^{2}\frac{|\beta|^{1/2}}{|\xi_2|^{3/2}|\frac{3\xi_2^3}{4}-\beta|^{1/2}|\beta+\alpha|^{1/2}}d\alpha
d\beta}^{1/2}\\
&&\les
2^{-k_3}2^{k_1/4}2^{j_{max}/4}\norm{g(\xi_2,\mu_2,\cdot)}_{L^2}.
\end{eqnarray*}
Therefore, we get
\[\eqref{eq:3zestcprf3}\les 2^{-k_3}2^{k_1/4}2^{j_{max}/4}\|g_1\|_{L^2}\|g_2\|_{L^2}\|g\|_{L^2}\]
which suffices to give the bound \eqref{eq:3zestb2}. By
interpolating \eqref{eq:3zesta} and \eqref{eq:3zestb2} then we get
that: under the same condition as for \eqref{eq:3zestb2} we have
\begin{eqnarray}\label{eq:3zestbextra}
\int_{\R^3}(f_1\ast f_2)\cdot f_3\lesssim 2^{(j_1+j_2)/2}2^{j_{3}/6}
2^{-2k_3/3}2^{k_1/6}\|f_1\|_{L^2}\|f_2\|_{L^2}\|f_3\|_{L^2}.
\end{eqnarray}
We will use it in the sequel.
\end{proof}

It is easy to see from \eqref{eq:3zestb1} that the direct using of
Bourgain space $X^{s,b}$ can only handle one-half derivative, which
was already proved in \cite{MoSaTz1}.

\section{Short-time bilinear estimates}

In this section we prove two bilinear estimates in $F^s$. From the
definition, we divide it into several cases. The first case is
high-low frequency interactions.

\begin{proposition}[high-low]\label{prop:hilow} If $k_3\geq 20$, $|k_2-k_3|\leq 5$, $0\leq k_1\leq k_2-10$, then
\begin{eqnarray}\label{eq:hilow}
\norm{P_{k_3}\partial_x(u_{k_1}v_{k_2})}_{N_{k_3}}\les 2^{k_1/2}
\norm{u_{k_1}}_{F_{k_1}}\norm{v_{k_2}}_{F_{k_2}}.
\end{eqnarray}
\end{proposition}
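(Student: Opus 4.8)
The plan is to reduce the $F_{k_3}$/$N_{k_3}$ estimate to a dyadic $L^2$ estimate in frequency space, using the time-localization structure built into the spaces, and then apply Lemma \ref{lem:3zest}(b). First, since $k_3\geq 20$ and $|k_2-k_3|\leq 5$, by definition of $N_{k_3}$ it suffices to bound $\norm{(\tau-\dr(\xi,\mu)+i2^{k_3})^{-1}\ft[\eta_0(2^{k_3}(t-t_{k_3}))\cdot P_{k_3}\partial_x(u_{k_1}v_{k_2})]}_{X_{k_3}}$ uniformly in $t_{k_3}$. I would choose a smooth partition $1=\sum_m \gamma(2^{k_2}t - m)$ subordinate to the $2^{-k_2}$ time scale, write $\eta_0(2^{k_3}(t-t_{k_3}))u_{k_1}v_{k_2}$ as a sum over $m$ of $[\gamma_m u_{k_1}]\cdot[\wt\gamma_m v_{k_2}]$ with $O(1)$ overlap, and use the definition of $F_{k_1}$ and $F_{k_2}$ to replace $u_{k_1},v_{k_2}$ by functions $f_{k_1}\in X_{k_1}$, $f_{k_2}\in X_{k_2}$ on these short time intervals (absorbing the $\partial_x$ into a harmless factor $2^{k_3}$, which will be cancelled against the gain below). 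After this reduction the left side is controlled by
\begin{eqnarray*}
2^{k_3}\sup_{t_{k_3}}\sum_{j_3\geq 0}2^{j_3/2}\normo{\eta_{j_3}(\tau-\dr)(\tau-\dr+i2^{k_3})^{-1}\mathbf{1}_{D_{k_3,j_3}}\cdot\big(g_{k_1}\ast g_{k_2}\big)}_{L^2},
\end{eqnarray*}
where $g_{k_i}$ are the (modulation-localized pieces of the) functions $f_{k_i}$, and $\norm{g_{k_i}}_{X_{k_i}}\les\norm{u_{k_i}}_{F_{k_i}}$ by \eqref{eq:pXk3}.

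Next I would decompose each $g_{k_i}$ dyadically in modulation, writing $g_{k_i}=\sum_{j_i\geq 0}g_{k_i,j_i}$ with $g_{k_i,j_i}$ supported in $D_{k_i,\leq j_i}$ (or in $D_{k_i,j_i}$), so that $\sum_{j_i}2^{j_i/2}\norm{g_{k_i,j_i}}_{L^2}\les\norm{u_{k_i}}_{F_{k_i}}$. The key point is that on short time intervals of length $2^{-k_2}$, the modulation variables are effectively $\gtrsim 2^{k_2}$, but more importantly the resonance function $\Omega$ obeys $|\Omega|\les 2^{k_1+2k_2}$ in this frequency regime (this is exactly the content of the computation in the proof of Lemma \ref{lem:3zest}), so that when $j_1,j_2$ and the output modulation $j_3$ are all small, the output is forced to be near the characteristic surface. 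Pairing with a dual function $f_3$ supported in $D_{k_3,\leq j_3}$, the trilinear integral $\int (g_{k_1,j_1}\ast g_{k_2,j_2})\cdot f_3$ is precisely of the form estimated in Lemma \ref{lem:3zest}(b): in the non-resonant range $j_{max}\leq k_1+k_2+k_3-20$ we use \eqref{eq:3zestb1}, gaining $2^{-k_3/2}2^{k_1/2}$, and in the resonant range $j_{max}\geq k_1+k_2+k_3-20$ we use \eqref{eq:3zestb2} or the interpolated \eqref{eq:3zestbextra}, gaining $2^{-2k_3/3}2^{k_1/6}2^{j_3/6}$.

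Then I would sum up. Writing $L_i=2^{j_i}$, the contribution of the factor $(\tau-\dr+i2^{k_3})^{-1}$ is a gain of $2^{-\max(j_3,k_3)}$, and the outer weight from the $X_{k_3}$ norm contributes $2^{j_3/2}$; combining with the $2^{k_3}$ from $\partial_x$, the net weight on the output is $2^{k_3}\cdot 2^{j_3/2}\cdot 2^{-\max(j_3,k_3)}$. Against the bounds from Lemma \ref{lem:3zest}(b), which supply $2^{(j_1+j_2)/2}$ times a negative power of $2^{k_3}$ and a small positive power of $2^{j_3}$, the $j_1,j_2$ sums are absorbed by $\sum_{j_i}2^{j_i/2}\norm{g_{k_i,j_i}}_{L^2}\les\norm{u_{k_i}}_{F_{k_i}}$, the $j_3$ sum converges because the surplus power of $2^{j_3}$ coming from $(\tau-\dr+i2^{k_3})^{-1}$ beats the $2^{j_3/2}$ and (in the resonant case) the $2^{j_3/6}$, and the remaining powers of $2^{k_1},2^{k_3}$ collapse to exactly $2^{k_1/2}$ using $|k_2-k_3|\leq 5$. (One has to be a little careful in the borderline subcase $j_3\leq k_3$, where the $\eta_{\leq k_3}$ piece of the $X_{k_3}$ norm is used and the relevant bound is \eqref{eq:pXk2} rather than a single dyadic estimate; this is where the precise form of \eqref{eq:pXk2} with its $2^{l/2}$ weight on $\eta_{\leq l}$ enters.)

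The main obstacle I anticipate is bookkeeping the interaction between the three scales $2^{-k_2}$ (time localization), the modulation thresholds $2^{j_i}$, and the resonance size $2^{k_1+2k_2}$, and in particular verifying that the $\partial_x$ loss of $2^{k_3}$ is genuinely recovered — this needs the full strength of the $2^{-k_3}$ (not merely $2^{-k_3/2}$) gain in the resonant regime \eqref{eq:3zestb2}/\eqref{eq:3zestbextra}, since in the non-resonant regime \eqref{eq:3zestb1} only the weaker $2^{-k_3/2}$ is available and must instead be compensated by the modulation gain $2^{-(\max(j_3,k_3)-j_3/2)}$ together with the constraint $j_{max}\leq k_1+k_2+k_3-20$. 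Checking that these two regimes patch together with a clean final bound of $2^{k_1/2}$, with no leftover logarithm in $k_3$, is the delicate part.
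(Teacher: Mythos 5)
Your proposal is correct and follows essentially the same route as the paper: reduce via the definitions of $N_{k_3}$, $F_{k_i}$ and \eqref{eq:pXk3} to a modulation-localized dyadic estimate with input modulations $j_1,j_2\gtrsim k_3$, apply \eqref{eq:3zestb1} in the non-resonant range and \eqref{eq:3zestbextra} in the resonant range, and sum using the $2^{j_3/2}2^{-\max(j_3,k_3)}$ weight; your numerology (net gain $2^{k_1/2}$, resp.\ $2^{k_1/6}$, with geometric sums and no logarithmic loss) matches the paper's. The only cosmetic difference is your extra partition of unity at scale $2^{-k_2}$, which is redundant here since $|k_2-k_3|\leq 5$ makes the single cutoff $\eta_0(2^{k_3-2}(t-t_k))$ suffice.
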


\begin{proof}
Using the definitions of $N_k$ and \eqref{eq:pXk3}, we obtain that
the left-hand side of \eqref{eq:hilow} is dominated by
\begin{eqnarray}\label{eq:hilowprf1}
&&C\sup_{t_k\in \R}\norm{(\tau-\omega(\xi,\mu)+i2^{k_3})^{-1}\cdot
2^{k_3}1_{I_{k_3}}(\xi) \nonumber\\
&&\quad \cdot \ft[u_{k_1}\eta_0(2^{k_3-2}(t-t_k))]*\ft[v_{k_2}
\eta_0(2^{k_3-2}(t-t_k))]}_{X_k}.
\end{eqnarray}
To prove Proposition \ref{prop:hilow}, it suffices to prove that if
$j_i\geq k_3$ and $f_{k_i,j_i}: \R^3\rightarrow \R_+$ are supported
in ${D}_{k_i,\leq j_i}$ ($D_{\leq 0,\leq j_1}$ if $k_1=0$) for
$i=1,2$, then
\begin{eqnarray}\label{eq:hilowprf2}
2^{k_3}\sum_{j_3\geq k_3}2^{-j_3/2}\norm{1_{{D}_{k_3,\leq j_3}}\cdot
(f_{k_1,j_1}*f_{k_2,j_2})}_{L^2}\les 2^{k_1/2}
2^{(j_1+j_2)/2}\norm{f_{k_1,j_1}}_{L^2}\norm{f_{k_2,j_2}}_{L^2}.
\end{eqnarray}

Indeed, let $f_{k_1}=\ft[u_{k_1}\eta_0(2^{k_3-2}(t-t_k))]$ and
$f_{k_2}=\ft[v_{k_2}\eta_0(2^{k_3-2}(t-t_k))]$. Then from the
definition of $X_k$ we get that \eqref{eq:hilowprf1} is dominated by
\begin{eqnarray}\label{eq:hilowprf3}
\sup_{t_k\in
\R}2^{k_3}\sum_{j_3=0}^{\infty}2^{j_3/2}\sum_{j_1,j_2\geq
k_3}\norm{(2^{j_3}+i2^{k_3})^{-1}1_{{D}_{k_3,j_3}}\cdot
f_{k_1,j_1}*f_{k_2,j_2}}_{L^2},
\end{eqnarray}
where we set
$f_{k_i,j_i}=f_{k_i}(\xi,\mu,\tau)\eta_{j_i}(\tau-\omega(\xi,\mu))$
for $j_i>k_3$ and the remaining part
$f_{k_i,k_3}=f_{k_i}(\xi,\mu,\tau)\eta_{\leq
k_3}(\tau-\omega(\xi,\mu))$, $i=1,2$. For the summation on the terms
$j_3<k_3$ in \eqref{eq:hilowprf3}, we get from the fact
$1_{D_{k_3,j_3}}\leq 1_{{D}_{k_3,\leq j_3}}$ that
\begin{eqnarray}
&&\sup_{t_k\in \R}2^{k_3}\sum_{j_3<k_3}2^{j_3/2}\sum_{j_1,j_2\geq
k_3}\norm{(2^{j_3}+i2^{k_3})^{-1}1_{{D}_{k_3,j_3}}\cdot
f_{k_1,j_1}*f_{k_2,j_2}}_{L^2}\nonumber\\
&&\les \sup_{t_k\in \R}2^{k_3}\sum_{j_1,j_2\geq
k_3}2^{-k_3/2}\norm{1_{\wt{D}_{k_3,k_3}}\cdot
f_{k_1,j_1}*f_{k_2,j_2}}_{L^2}.
\end{eqnarray}
From the fact that $f_{k_i,j_i}$ is supported in $\wt{D}_{k_i,j_i}$
for $i=1,2$ and using \eqref{eq:hilowprf2}, then we get that
\begin{eqnarray*}
&&\sup_{t_k\in \R}2^{k_3}\sum_{j_1,j_2\geq
k_3}2^{-k_3/2}\norm{1_{{D}_{k_3,\leq k_3}}\cdot
f_{k_1,j_1}*f_{k_2,j_2}}_{L^2}\\
 &&\les 2^{k_1/2}\sup_{t_k\in \R}
\sum_{j_1,j_2\geq
k_3}2^{j_1/2}\norm{f_{k_1,j_1}}_{L^2}2^{j_2/2}\norm{f_{k_2,j_2}}_{L^2}.
\end{eqnarray*}
Thus from the definition and using \eqref{eq:pXk2} and
\eqref{eq:pXk3} we obtain \eqref{eq:hilow}, as desired.

To prove \eqref{eq:hilowprf2}, we assume first $k_1\geq 1$. If
$j_{max}\leq k_1+k_2+k_3-20$ on the left-hand side of
\eqref{eq:hilowprf2}, then applying \eqref{eq:3zestb1} we get
\begin{eqnarray*}
&&2^{k_3}\sum_{j_3\geq k_3}2^{-j_3/2}\norm{1_{{D}_{k_3,\leq
j_3}}\cdot
(f_{k_1,j_1}*f_{k_2,j_2})}_{L^2} \nonumber\\
&&\les 2^{k_3}\sum_{j_3\geq
k_3}2^{(j_1+j_2-j_3)/2}2^{k_1/2}2^{-k_3/2}\prod_{i=1}^2\norm{f_{k_i,j_i}}_{L^2}\les
2^{k_1/2}2^{(j_1+j_2)/2}\prod_{i=1}^2\norm{f_{k_i,j_i}}_{L^2}.
\end{eqnarray*}
If $j_{max}\geq k_1+k_2+k_3-20$, then applying
\eqref{eq:3zestbextra} we get
\begin{eqnarray*}
&&2^{k_3}\sum_{j_3\geq k_3}2^{-j_3/2}\norm{1_{{D}_{k_3,\leq
j_3}}\cdot
(f_{k_1,j_1}*f_{k_2,j_2})}_{L^2} \nonumber\\
&&\les 2^{k_3}\sum_{j_3\geq
k_3}2^{(j_1+j_2)/2}2^{-j_3/3}2^{-2k_3/3}2^{k_1/6}\prod_{i=1}^2\norm{f_{k_i,j_i}}_{L^2}\les
2^{k_1/6}2^{(j_1+j_2)/2}\prod_{i=1}^2\norm{f_{k_i,j_i}}_{L^2}.
\end{eqnarray*}
For the case $k_1=0$, we can handle it similarly. Decomposing the
low frequency dyadically and using the same argument as above, then
we see we could sum over the low frequency.
\end{proof}

When the three frequencies are comparable, we have

\begin{proposition}\label{prop:hihihi}
Assume $k_3\geq 20$. If $|k_3-k_2|\leq 5$ and $|k_1-k_2|\leq 5$ then
we have
\begin{eqnarray}\label{eq:hihihi}
\norm{P_{k_3}\partial_x(u_{k_1}v_{k_2})}_{N_{k_3}}\les k_3
2^{-k_3/2}
 \norm{u_{k_1}}_{F_{k_1}}\norm{v_{k_2}}_{F_{k_2}}.
\end{eqnarray}
\end{proposition}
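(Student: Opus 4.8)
The plan is to follow exactly the scheme used in the proof of Proposition \ref{prop:hilow}: reduce the $N_{k_3}$-estimate to a dyadic $L^2$ convolution estimate for functions localized in the regions $D_{k_i,\le j_i}$, and then invoke the symmetric estimates of Lemma \ref{lem:3zest}. Concretely, using the definition of $N_{k_3}$, \eqref{eq:pXk3} and the almost-orthogonality of the modulation decomposition, it suffices to show that if $j_i\ge k_3$ and $f_{k_i,j_i}:\R^3\to\R_+$ are supported in $D_{k_i,\le j_i}$ for $i=1,2$, then
\begin{eqnarray*}
2^{k_3}\sum_{j_3\ge k_3}2^{-j_3/2}\norm{1_{D_{k_3,\le j_3}}\cdot(f_{k_1,j_1}*f_{k_2,j_2})}_{L^2}\les k_3\,2^{-k_3/2}\,2^{(j_1+j_2)/2}\norm{f_{k_1,j_1}}_{L^2}\norm{f_{k_2,j_2}}_{L^2}.
\end{eqnarray*}
The gain of a full derivative (from $2^{k_3}\cdot 2^{-k_3}=1$ roughly down to $2^{-k_3/2}$) must come from the modulation weights together with the size of the resonance function $\Omega$, and the logarithmic loss $k_3$ will appear from summing a geometric-type series over the $O(k_3)$ relevant values of $j_3$ between $k_3$ and $k_1+k_2+k_3$.

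The estimate splits according to the size of $j_{max}=\max(j_1,j_2,j_3)$. When $j_{max}\ge k_1+k_2+k_3-20$ I would apply part (a) of Lemma \ref{lem:3zest}, using the bound $2^{(j_1+j_2+j_3)/2}2^{-j_{max}/2}$: this yields a factor $2^{k_3}\cdot 2^{-j_3/2}\cdot 2^{(j_1+j_2+j_3)/2}2^{-j_{max}/2}$, and since in this range $2^{(j_1+j_2)/2}\ge$ something comparable to $2^{(k_1+k_2+k_3)/2}\cdot 2^{-j_3/2}$ one can absorb the powers of $2^{k_3}$; summing over the finitely many (at most $O(k_3)$) dyadic values of $j_3$ with $j_3\le j_{max}$ produces the factor $k_3$. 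When $j_{max}\le k_1+k_2+k_3-20$, the nonresonant regime, I would instead use part (a) in the form $2^{(j_1+j_2+j_3)/2}2^{-(k_1+k_2+k_3)/2}$; here $2^{k_3}2^{-j_3/2}2^{(j_1+j_2+j_3)/2}2^{-(k_1+k_2+k_3)/2}=2^{(j_1+j_2)/2}2^{k_3/2-k_1/2-k_2/2}\sim 2^{-k_3/2}2^{(j_1+j_2)/2}$ using $k_1\sim k_2\sim k_3$, and summing $2^{-j_3/2}\cdot 2^{j_3/2}$-balanced terms over $k_3\le j_3\le k_1+k_2+k_3-20$ again costs only the factor $k_3$.

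The main obstacle I anticipate is the case $j_{max}\ge k_1+k_2+k_3-20$ \emph{with the large modulation sitting on an input}, i.e. $j_{max}=j_1$ or $j_2$: then the weight $2^{-j_3/2}$ on the output is not by itself enough, and one must check that the bound from Lemma \ref{lem:3zest}(a) still closes after summing over $j_3$. Since in the present all-frequencies-comparable situation there is no transversality to exploit beyond the crude $L^2$ bound, one genuinely relies on $2^{-j_{max}/2}$ to beat $2^{k_3}$; the verification that $2^{k_3}2^{-j_{max}/2}\cdot 2^{(j_1+j_2)/2}\les 2^{-k_3/2}2^{(j_1+j_2)/2}$ exactly when $j_{max}\gtrsim k_1+k_2+k_3\sim 3k_3$ is what forces the threshold $20$ in the hypothesis and is the delicate bookkeeping point. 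Everything else—the reduction to the dyadic estimate, the handling of the factor $i2^{k_3}$ in the denominator (which only helps, giving $(2^{j_3}+2^{k_3})^{-1}\le 2^{-j_3/2}2^{-k_3/2}$ when $j_3\le k_3$, absorbed into the low-$j_3$ part), and the passage back from $X_{k_3}$ to $N_{k_3}$ via \eqref{eq:pXk2}–\eqref{eq:pXk3}—is routine and parallels Proposition \ref{prop:hilow} verbatim.
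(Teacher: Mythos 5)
Your proposal is correct and follows essentially the same route as the paper: reduce to the dyadic estimate in $X_{k_3}$ and apply \eqref{eq:3zesta}, with the factor $k_3$ produced by the $O(k_3)$ values of $j_3$ in the intermediate range and the geometric decay from $2^{-j_{max}/2}$ beyond it. The only bookkeeping slip is the claim that there are $O(k_3)$ relevant values of $j_3\leq j_{max}$ when $j_{max}=j_1$ or $j_2$ may be much larger than $k_3$; this is harmless because in that regime $2^{-j_{max}/2}\les 2^{-3k_3/2}$ leaves exponential room to absorb the extra count, and the paper sidesteps the issue entirely by splitting at $j_3\leq 10k_3$ versus $j_3>10k_3$ rather than on the size of $j_{max}$.
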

\begin{proof}
As in the proof of Proposition \ref{prop:hilow}, it suffices to
prove that if $j_1,j_2\geq k_3$ and $f_{k_i,j_i}: \R^3\rightarrow
\R_+$ are supported in ${D}_{k_i,\leq j_i}$, $i=1,2$, then
\begin{eqnarray}\label{eq:hihihiprf1}
2^{k_3}\sum_{j_3\geq k_3}2^{-j_3/2}\norm{1_{{D}_{k_3,\leq j_3}}
(f_{k_1,j_1}*f_{k_2,j_2})}_{L^2}\les k_3 2^{-k_3/2}
2^{j_1/2}\norm{f_{k_1,j_1}}_{L^2}2^{j_2/2}\norm{f_{k_2,j_2}}_{L^2}.
\end{eqnarray}
To prove \eqref{eq:hihihiprf1}, clearly we may assume $j_3\leq
10k_3$, otherwise applying \eqref{eq:3zesta} then we have
$2^{-5k_3}$ to spare. Applying \eqref{eq:3zesta} we get that
\begin{eqnarray*}
2^{k_3}\sum_{j_3\geq k_3}2^{-j_3/2}\norm{1_{{D}_{k_3,\leq j_3}}
(f_{k_1,j_1}*f_{k_2,j_2})}_{L^2}\les 2^{(j_1+j_2)/2}
2^{k_3}\sum_{j_3\leq
10k_3}2^{-3k_{3}/2}\prod_{i=1}^2\norm{f_{k_i,j_i}}_{L^2},
\end{eqnarray*}
which gives the bound \eqref{eq:hihihiprf1}, as desired.
\end{proof}

\begin{proposition}[high-high]\label{prop:hhl}
If $k_2\geq 20$, $|k_1-k_2|\leq 5$ and $ 0\leq k_3\leq k_1-10$, then
we have
\begin{eqnarray}\label{eq:hhl}
\norm{P_{k_3}\partial_x(u_{k_1}v_{k_2})}_{N_{k_3}}\les k_22^{-k_3/2}
\norm{u_{k_1}}_{F_{k_1}}\norm{v_{k_2}}_{F_{k_2}}.
\end{eqnarray}
\end{proposition}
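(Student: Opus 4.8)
The plan is to reduce, exactly as in the proofs of Propositions~\ref{prop:hilow} and \ref{prop:hihihi}, to a dyadic $L^2$ estimate: using the definition of $N_{k_3}$, the estimate \eqref{eq:pXk3}, and the fact that multiplication by $\eta_0(2^{k_3-2}(t-t_k))$ localizes time at scale $2^{-k_3}$ so that the resulting modulations $j_1,j_2$ exceed $k_3$, it suffices to show that for $j_1,j_2\geq k_3$ and $f_{k_i,j_i}:\R^3\to\R_+$ supported in $D_{k_i,\leq j_i}$ ($i=1,2$), one has
\begin{eqnarray}\label{eq:hhlprf1}
2^{k_3}\sum_{j_3\geq k_3}2^{-j_3/2}\norm{1_{D_{k_3,\leq j_3}}(f_{k_1,j_1}*f_{k_2,j_2})}_{L^2}\les k_2 2^{-k_3/2}2^{(j_1+j_2)/2}\norm{f_{k_1,j_1}}_{L^2}\norm{f_{k_2,j_2}}_{L^2}.
\end{eqnarray}
Here the roles of the ``output'' frequency $k_3$ and the two ``input'' frequencies $k_1,k_2$ are permuted relative to Proposition~\ref{prop:hilow}: now $k_3$ is the low one and $k_1\sim k_2$ are high. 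The symmetric estimate of Lemma~\ref{lem:3zest}(b) still applies to the convolution integral $\int (f_1*f_2)\cdot f_3$ after relabelling, because that lemma only requires one frequency to be much smaller than the other two comparable ones; with the present labels the small frequency is $k_3$, so in the notation of \eqref{eq:3zestb1}–\eqref{eq:3zestbextra} we substitute ``$k_1$'' $\mapsto k_3$ and ``$k_3$'' $\mapsto k_2$ (or $k_1$), and the two large frequencies are $k_1,k_2$.

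Next I would split into the two modulation regimes. If $j_{max}\leq k_1+k_2+k_3-20$, apply the relabelled \eqref{eq:3zestb1}: the gain is $2^{(j_a+j_b)/2}2^{-k_{\rm large}/2}2^{k_3/2}$ where $\{j_a,j_b\}$ are the two smaller of $j_1,j_2,j_3$ and $k_{\rm large}\sim k_2$. Since $j_1,j_2\geq k_3$ while $j_3$ is summed, the prefactor $2^{k_3}\sum_{j_3\geq k_3}2^{-j_3/2}$ together with this gain produces $2^{k_3}\cdot 2^{(j_1+j_2)/2}2^{-k_2/2}2^{k_3/2}\cdot 2^{-k_3/2}=2^{(j_1+j_2)/2}2^{k_3}2^{-k_2/2}\les 2^{(j_1+j_2)/2}2^{k_3/2}$, which is even better than the claimed bound (no logarithmic loss in this regime). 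If instead $j_{max}\geq k_1+k_2+k_3-20$, apply the relabelled interpolated estimate \eqref{eq:3zestbextra}, which gives $2^{(j_a+j_b)/2}2^{j_{max}/6}2^{-2k_2/3}2^{k_3/6}$; here the factor $2^{-j_3/2}$ from the $N_{k_3}$ norm is not enough on its own to sum $j_3$ when $j_3=j_{max}$, but pairing $2^{j_3/6}$ with $2^{-j_3/2}$ yields $2^{-j_3/3}$, summable, and the remaining powers of $2^{k_3},2^{k_2}$ again beat $2^{-k_3/2}$. The one place a genuine logarithm appears is the borderline sub-case where $j_1$ or $j_2$ is the maximum and is comparable to $k_1+k_2+k_3$: then after using \eqref{eq:3zestbextra} (or, alternatively, \eqref{eq:3zesta} with the $2^{-j_{max}/2}$ factor) one is left summing $2^{k_3}\sum_{k_3\leq j_3\lesssim k_2}2^{-j_3/2}\cdot(\text{no }j_3\text{ gain beyond }2^{-j_3/2})$-type expressions over the $O(k_2)$ admissible dyadic values of $j_3$, which is exactly the source of the factor $k_2$ in \eqref{eq:hhl}.

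The main obstacle is precisely that logarithmic sub-case: one must check that in the regime where the output modulation $j_3$ can range freely between $k_3$ and $\sim k_2$ while the resonance identity forces $j_{max}\gtrsim k_1+k_2+k_3$ on the input side, there is no better-than-$2^{-j_3/2}$ decay available in $j_3$, so the $j_3$-summation genuinely costs a factor $\sim k_2$ rather than $O(1)$. I would handle this by treating separately the cases according to which of $j_1,j_2,j_3$ realizes $j_{max}$: when $j_{max}=j_3$ the factor $2^{-j_3/2}$ from $N_{k_3}$ combines with the $2^{j_{max}/6}$ (or $2^{j_3/2}$ from \eqref{eq:3zesta}) to restore summability with no loss; when $j_{max}\in\{j_1,j_2\}$ one uses \eqref{eq:3zesta} in the form $2^{(j_1+j_2+j_3)/2}2^{-j_{max}/2}$, absorbs one factor of $2^{-j_{max}/2}$ against the large input modulation, and is left with the harmless-but-logarithmic bare sum $\sum_{k_3\le j_3\lesssim k_2}1\sim k_2$. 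Finally, the endpoint case $k_3=0$ is handled as in Proposition~\ref{prop:hilow}: the low output frequency is decomposed dyadically (or simply left as the non-homogeneous block $P_{\le 0}$), and since all estimates above carry a positive power of $2^{k_3}$ to spare in the main regime, the dyadic pieces sum without difficulty.
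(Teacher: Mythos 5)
There is a genuine gap, and it lies in the very first step. You reduce matters ``exactly as in Propositions~\ref{prop:hilow} and~\ref{prop:hihihi}'' to a dyadic estimate with prefactor $2^{k_3}$ and input modulations $j_1,j_2\geq k_3$. That reduction is not available here. In the $N_{k_3}$ norm the time localization $\eta_0(2^{k_3}(t-t_k))$ lives on an interval of length $2^{-k_3}$, which in the high-high regime is much \emph{longer} than the scale $2^{-k_2}$ on which the $F_{k_1},F_{k_2}$ norms of the inputs give $X_k$ control; \eqref{eq:pXk3} only lets you pass to \emph{shorter} time intervals, so $\norm{\ft[u_{k_1}\eta_0(2^{k_3}(t-t_k))]}_{X_{k_1}}$ is not bounded by $\norm{u_{k_1}}_{F_{k_1}}$. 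The paper repairs this by inserting a partition of unity $\sum_m\beta^2(2^{k_2}(t-t_k)-m)\equiv 1$ and summing over the $\sim 2^{k_2-k_3}$ subintervals of length $2^{-k_2}$ by the triangle inequality. This changes the prefactor in the reduced dyadic estimate from $2^{k_3}$ to $2^{k_2}$ (namely $2^{k_3}$ from $\partial_x$ times $2^{k_2-k_3}$ pieces) and forces $j_1,j_2\geq k_2$; compare \eqref{eq:hhlprf1} and \eqref{eq:hhl1}. With the correct prefactor the required gain $2^{-k_3/2}$ and the factor $k_2$ are exactly what one can prove (apply \eqref{eq:3zesta}: the $2^{j_3/2}$ cancels the $2^{-j_3/2}$ and the bare sum over $j_3\lesssim k_2$ produces the logarithm), whereas your version with prefactor $2^{k_3}$ is off by a factor of order $2^{k_2-k_3}$. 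The repeated observation in your argument that every case comes out ``even better than the claimed bound'' is the symptom: the reduction has already discarded the main difficulty of the high-high interaction.

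Two smaller points. First, your hunt for the source of the factor $k_2$ in a ``borderline modulation sub-case'' is partly on target (it is the $j_3$-sum over $O(k_2)$ dyadic values with no leftover decay), but the accounting only comes out right after the time subdivision described above; built on the wrong reduction it is moot. Second, your idea of applying Lemma~\ref{lem:3zest}(b) after permuting the roles of the three frequencies is legitimate in principle (the trilinear form $\int(f_1\ast f_2)f_3$ and the supports $D_{k,\leq j}$ are symmetric under the relevant reflections), but the paper's proof of this proposition needs only \eqref{eq:3zesta}; the finer estimates \eqref{eq:3zestb1}--\eqref{eq:3zestbextra} are not required once the correct $2^{k_2}$ prefactor is in place. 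The treatment of $k_3=0$ by dyadic decomposition of the low output frequency does agree with the paper.
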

\begin{proof}
Let $\beta:\R\rightarrow [0,1]$ be a smooth function supported in
$[-1,1]$ with the property that
\[\sum_{n\in \Z}\beta^2(x-n)\equiv 1, \quad x\in \R.\]
Using the definitions and decomposing the low frequency part, we get
that the left-hand side of \eqref{eq:hhl} is dominated by
\begin{eqnarray*}
&&C\sup_{t_k\in \R}\sum_{k_3'\leq
k_3}\normb{(\tau-\omega(\xi,\eta)+i2^{{k_3'}_+})^{-1}2^{{k_3'}}\chi_{k_3'}(\xi)\sum_{|m|\leq
C2^{(k_2-{k_3'}_+)}}\\
&& \qquad \ft[u_{k_1}\eta_0(2^{{k_3'}_+}(t-t_k))\beta(2^{k_2}(t-t_k)-m)]*\\
&& \qquad
\ft[u_{k_2}\eta_0(2^{{k_3'}_+}(t-t_k))\beta(2^{k_2}(t-t_k)-m)]}_{X_k}.
\end{eqnarray*}

We assume first $k_3=0$. In view of the definitions, \eqref{eq:pXk2}
and \eqref{eq:pXk3}, it suffices to prove that if $j_1,j_2\geq k_2$,
and $f_{k_i,j_i}:\R^3 \ra \R_+$ are supported in ${D}_{k_i,\leq
j_i}$, $i=1,2$, then
\begin{eqnarray}\label{eq:hhlprf1}
&&\sum_{k_3'\leq 0} 2^{k_3'}2^{k_2}\sum_{j_3\geq 0}
2^{-j_3/2}\norm{\chi_{k_3'}(\xi)\eta_{\leq j_3}(\tau-\omega(\xi))(f_{k_1,j_1}*f_{k_2,j_2})}_{L^2}\nonumber\\
&& \les k_22^{j_1/2}\norm{f_{k_1,j_1}}_{L^2}\cdot
2^{j_2/2}\norm{f_{k_2,j_2}}_{L^2}.
\end{eqnarray}
To prove \eqref{eq:hhlprf1}, clearly we may assume $j_3\leq 10k_2$.
Applying \eqref{eq:3zesta} we get that the left-hand side of
\eqref{eq:hhlprf1} is bounded by
\begin{eqnarray*}
2^{k_2}\sum_{k_3'\leq 0}\sum_{j_3\leq 10k_2}
2^{(j_1+j_2)/2}2^{k'_3/2}2^{-k_2}\norm{f_{k_1,j_1}}_{L^2}\cdot
\norm{f_{k_2,j_2}}_{L^2}
\end{eqnarray*}
which gives the bound \eqref{eq:hhlprf1}, as desired.

We assume now $k_3\geq 1$. In view of the definitions,
\eqref{eq:pXk2} and \eqref{eq:pXk3}, it suffices to prove that if
$j_1,j_2\geq k_2$, and $f_{k_i,j_i}:\R^3 \ra \R_+$ are supported in
$\widetilde{D}_{k_i,j_i}$, $i=1,2$, then
\begin{eqnarray}\label{eq:hhl1}
&&2^{k_2}\sum_{j_3\geq k_3}
2^{-j_3/2}\norm{\chi_{k_3}(\xi)\eta_{\leq j_3}(\tau-\omega(\xi))(f_{k_1,j_1}*f_{k_2,j_2})}_{L^2}\nonumber\\
&& \les k_2 2^{-k_3/2}2^{j_1/2}\norm{f_{k_1,j_1}}_{L^2}\cdot
2^{j_2/2}\norm{f_{k_2,j_2}}_{L^2}
\end{eqnarray}
which can be proved in the same way as the case $k_3=0$.
\end{proof}

Finally we consider the low-low interaction. Generally, if
considering the data without special low frequency structure, then
one can always control this interaction. This is different from the
data in the energy space $\E^1$.

\begin{proposition}[low-low]\label{prop:lll}
If $0\leq k_1,k_2,k_3\leq 200$, then
\begin{eqnarray}
\norm{P_{k_3}\partial_x(u_{k_1}v_{k_2})}_{N_{k_3}}\les
\norm{u_{k_1}}_{F_{k_1}}\norm{v_{k_2}}_{F_{k_2}}.
\end{eqnarray}
\end{proposition}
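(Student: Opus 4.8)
The plan is to mimic the proofs of the earlier high--low and high--high propositions, reducing the bound to a dyadic $L^2$ estimate and then feeding it to part (a) of Lemma~\ref{lem:3zest}. Since all of $k_1,k_2,k_3$ are bounded by $200$, we only deal with finitely many frequency blocks, so there is no need to sum over a Littlewood--Paley index and every implicit constant may absorb the loss $2^{Ck_i}$. Concretely, using the definition of $N_{k_3}$, \eqref{eq:pXk3}, and (if $k_3=0$) a decomposition of the output frequency, it suffices to prove that for $j_1,j_2\geq k_3$ and $f_{k_i,j_i}\colon\R^3\to\R_+$ supported in $D_{k_i,\leq j_i}$ (with the obvious modification if some $k_i=0$),
\begin{eqnarray*}
2^{k_3}\sum_{j_3\geq k_3}2^{-j_3/2}\norm{1_{D_{k_3,\leq j_3}}(f_{k_1,j_1}*f_{k_2,j_2})}_{L^2}\les 2^{j_1/2}\norm{f_{k_1,j_1}}_{L^2}\,2^{j_2/2}\norm{f_{k_2,j_2}}_{L^2}.
\end{eqnarray*}

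First I would dispose of the contribution of large modulations: if $j_3\geq 1000$ (say), applying \eqref{eq:3zesta} with the $2^{-j_{max}/2}$ gain shows that part of the sum is bounded by $2^{k_3}\sum_{j_3\geq 1000}2^{(j_1+j_2)/2}2^{-j_3/2}\prod\norm{f_{k_i,j_i}}_{L^2}$, which already has the desired form (and is in fact summable with room to spare, using $k_3\leq 200$). So we may assume $j_3\leq 1000$, and likewise we may as well note that $j_1,j_2$ range over $[k_3,\,\infty)$ but that large $j_1$ or $j_2$ causes no trouble because the left side is monotone in the $2^{j_i/2}\norm{f_{k_i,j_i}}_{L^2}$ factors. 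For the remaining range $j_3\leq 1000$ I would simply apply \eqref{eq:3zesta} again, this time keeping the factor $2^{(j_1+j_2+j_3)/2}2^{-(k_1+k_2+k_3)/2}$; since $k_1,k_2,k_3\geq 0$ are all $O(1)$ here, this is $\les 2^{(j_1+j_2)/2}$ after summing the finitely many $j_3\leq 1000$, and the prefactor $2^{k_3}$ is likewise $O(1)$. This gives the displayed dyadic estimate, and then the passage back to the $F_k$/$N_k$ norms is identical to the end of the proof of Proposition~\ref{prop:hilow}, using \eqref{eq:pXk2} and \eqref{eq:pXk3}; the case where one or more of the $k_i$ vanish is handled exactly as there, by decomposing the relevant low-frequency piece dyadically, with the sum over $\les 1$ pieces being harmless.

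Honestly, there is no real obstacle here: the low--low case is the easiest of the four interactions precisely because no derivative is genuinely at stake (the operator $\partial_x$ only contributes $2^{k_3}\les 1$) and the convolution estimate \eqref{eq:3zesta} alone suffices with all powers of $2$ under control. The only point requiring a little care is bookkeeping in the $k_3=0$ sub-case: one must decompose both the output into $\chi_{k_3'}$ pieces with $k_3'\leq 0$ and the inputs on the $2^{0}$ time scale, exactly as in Proposition~\ref{prop:hhl}, and check that the resulting geometric sum over $k_3'$ converges — which it does, with a gain $2^{k_3'/2}$ coming from the $2^{-(k_1+k_2+k_3')/2}$ in \eqref{eq:3zesta}. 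I would therefore write the proof tersely, applying \eqref{eq:3zesta} in the two modulation regimes and then invoking the now-standard reduction, and simply remark that the low-frequency summations are performed as in the previous propositions.
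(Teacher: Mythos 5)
Your argument is correct and follows essentially the same route as the paper's proof, which is even terser: it reduces everything to a single bound in $X_{\leq 100}$ with the weight $(\tau-\omega(\xi,\mu)+i)^{-1}\xi\cdot 1_{I_{\leq 100}}(\xi)$ and then cites \eqref{eq:3zesta}, \eqref{eq:pXk2} and \eqref{eq:pXk3}. One small bookkeeping caveat: in the sub-case where all three factors are genuinely low-frequency and are decomposed into dyadic pieces at scales $2^{k_1'},2^{k_2'},2^{k_3'}$ with all $k_i'\to-\infty$, the branch $2^{-(k_1'+k_2'+k_3')/2}$ of the minimum in \eqref{eq:3zesta} degenerates (it behaves like $2^{-3k'/2}$, which the single derivative factor $2^{k_3'}$ does not absorb), so there one should instead use the other branch $2^{-j_{max}/2}\leq 2^{-j_3/2}$ --- which in fact closes the $j_3$-sum uniformly in all the frequency parameters and renders your two-regime split in $j_3$ unnecessary.
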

\begin{proof}
From definitions, it suffices to prove
\begin{eqnarray}
&&\norm{(\tau-\omega(\xi,\mu)+i)^{-1}\xi \cdot 1_{I_{\leq
100}}(\xi)\cdot
\ft(u_{k_1})*\ft(v_{k_2})}_{X_{\leq 100}}\nonumber\\
&&\les \norm{\ft(u_{k_1})}_{X_{\leq
100}}\norm{\ft(u_{k_2})}_{X_{\leq 100}}.
\end{eqnarray}
This follows immediately from the definitions, \eqref{eq:3zesta},
\eqref{eq:pXk2} and \eqref{eq:pXk3}.
\end{proof}

As a conclusion to this section we prove the bilinear estimates,
using the dyadic bilinear estimates obtained above.
\begin{proposition}\label{prop:bilinear}
(a) If $s\geq 1$, $T\in (0,1]$, and $u,v\in F^{s}(T)$ then
\begin{eqnarray}
\norm{\partial_x(uv)}_{N^{s}(T)}&\les&
\norm{u}_{F^{s}(T)}\norm{v}_{F^{1}(T)}+\norm{u}_{F^{1}(T)}\norm{v}_{F^{s}(T)}.
\end{eqnarray}

(b)If $T\in (0,1]$, $u\in F^{0}(T)$ and $v\in F^{1}(T)$ then
\begin{eqnarray}
\norm{\partial_x(uv)}_{N^{0}(T)}&\les&
\norm{u}_{F^{0}(T)}\norm{v}_{F^{1}(T)}.
\end{eqnarray}
\end{proposition}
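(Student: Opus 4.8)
The plan is to reduce everything to the dyadic estimates already established in Propositions \ref{prop:hilow}, \ref{prop:hihihi}, \ref{prop:hhl}, and \ref{prop:lll}. First I would fix extensions $\wt u$ of $u$ and $\wt v$ of $v$ on $\R^2\times\R$ that nearly realize the infima defining $\norm{u}_{F^s(T)}$ and $\norm{v}_{F^1(T)}$ (and the symmetric pair), so that it suffices to prove the global estimate $\norm{\partial_x(\wt u\wt v)}_{N^s}\lesssim \norm{\wt u}_{F^s}\norm{\wt v}_{F^1}+\norm{\wt u}_{F^1}\norm{\wt v}_{F^s}$ and then pass to the local norms by taking infima at the end. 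Writing $\wt u=\sum_{k_1}P_{k_1}\wt u$, $\wt v=\sum_{k_2}P_{k_2}\wt v$, and decomposing the output frequency $P_{k_3}$, the left-hand side squared is $\sum_{k_3}2^{2sk_3}\norm{P_{k_3}\partial_x\big(\sum_{k_1,k_2}P_{k_1}\wt u\,P_{k_2}\wt v\big)}_{N_{k_3}}^2$. The core point is that for each $k_3$ the sum over $(k_1,k_2)$ is effectively finite: by the support properties of $\wh{P_{k_1}\wt u\cdot P_{k_2}\wt v}$ in the $\xi$-variable, $P_{k_3}(P_{k_1}\wt u\,P_{k_2}\wt v)=0$ unless either $k_1,k_2,k_3$ are pairwise within a bounded number of dyadic scales, or one of them is much larger than (and within $5$ of) another with the third much smaller — precisely the four regimes covered by the four propositions.

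Next I would organize the $(k_1,k_2)$ sum into the four cases. (i) \emph{high-low} ($k_3\ge 20$, $|k_2-k_3|\le 5$, $0\le k_1\le k_2-10$) and its mirror image with the roles of $k_1,k_2$ swapped: here Proposition \ref{prop:hilow} gives $\norm{P_{k_3}\partial_x(u_{k_1}v_{k_2})}_{N_{k_3}}\lesssim 2^{k_1/2}\norm{u_{k_1}}_{F_{k_1}}\norm{v_{k_2}}_{F_{k_2}}$, and since $k_3\sim k_2$ the weight $2^{sk_3}$ is comparable to $2^{sk_2}$; summing in $k_1$ the factor $2^{k_1/2}$ is summable and produces $\norm{u}_{F^0}\le\norm{u}_{F^1}$ (here $s\ge 1$ is used so $2^{k_1/2}2^{-k_1}$ is summable against $2^{k_1}\norm{u_{k_1}}$, actually one keeps $2^{k_1/2}\le 2^{k_1}$ and sums $\sum 2^{-k_1/2}\cdot(2^{k_1}\|u_{k_1}\|)$ by Cauchy–Schwarz), and the $k_3$-sum is handled by Cauchy–Schwarz and the $\ell^2$ structure of the $F^s$ and $F^1$ norms. (ii) \emph{comparable frequencies} ($|k_1-k_3|,|k_2-k_3|\le 5$): Proposition \ref{prop:hihihi} gives the gain $k_32^{-k_3/2}$, which is more than enough to absorb the bounded number of terms and sum in $k_3$. (iii) \emph{high-high-low} ($k_2\ge 20$, $|k_1-k_2|\le 5$, $0\le k_3\le k_1-10$): Proposition \ref{prop:hhl} gives $\norm{P_{k_3}\partial_x(u_{k_1}v_{k_2})}_{N_{k_3}}\lesssim k_22^{-k_3/2}\norm{u_{k_1}}_{F_{k_1}}\norm{v_{k_2}}_{F_{k_2}}$; here $2^{sk_3}$ is harmless (indeed $2^{sk_3}2^{-k_3/2}\le 2^{(s-1/2)k_3}$ is summable only after pairing with $k_3\le k_1$, so one writes $2^{sk_3}k_22^{-k_3/2}\lesssim 2^{sk_1}k_2 2^{-(k_1-k_3)/2}\cdot 2^{-(s-1/2)(k_1-k_3)}$ and uses $s\ge1$), and the losing factor $k_2$ against the exponential gain in $k_1-k_3$ is summable; the two high frequencies $k_1\sim k_2$ are then summed by Cauchy–Schwarz distributing the weight as $2^{sk_1}\sim 2^{sk_2}$, which after placing $s$ on one factor and $1$ on the other (using $s\ge 1$ and the $\ell^2$ norms) gives exactly $\norm{u}_{F^s}\norm{v}_{F^1}+\norm{u}_{F^1}\norm{v}_{F^s}$. (iv) \emph{low-low} ($0\le k_1,k_2,k_3\le 200$): Proposition \ref{prop:lll} handles this finite range with no weights to worry about, absorbed into $\norm{u}_{F^0}\norm{v}_{F^0}\le\norm{u}_{F^1}\norm{v}_{F^1}$. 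Part (b) is the same argument but simpler: with $s=0$ one never needs the high-high case to produce an $F^s$ factor on the low input, and in the high-low case the output weight is trivial, so Propositions \ref{prop:hilow}, \ref{prop:hihihi}, \ref{prop:hhl}, \ref{prop:lll} combine directly to give $\norm{\partial_x(uv)}_{N^0(T)}\lesssim\norm{u}_{F^0(T)}\norm{v}_{F^1(T)}$, the point being that in the high-high interaction the large output of a low-frequency piece is controlled by the $F^1$ norm of the high factor times the $F^0$ norm of the other.

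The step I expect to be the main obstacle is the bookkeeping in case (iii), the high-high-to-low interaction, combined with correctly distributing the Sobolev weight $2^{sk_3}$ across the three cases so that the right-hand side comes out as the stated sum $\norm{u}_{F^s}\norm{v}_{F^1}+\norm{u}_{F^1}\norm{v}_{F^s}$ rather than, say, $\norm{u}_{F^s}\norm{v}_{F^s}$. The subtlety is that the logarithmic loss $k_2$ (from Proposition \ref{prop:hhl}) and, in case (ii), the loss $k_3$ (from Proposition \ref{prop:hihihi}) must be defeated by the genuine exponential gains $2^{-k_3/2}$ or $2^{-k_3/2}$ together with frequency separation; this works precisely because $s\ge 1$ leaves a strictly positive power of $2$ to spare after one accounts for one derivative, so the $k_2$ (resp.\ $k_3$) factor is summed against a geometric series. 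Once the weight distribution is set up, each case is a routine Cauchy–Schwarz in the remaining free dyadic index, and reassembling via the $\ell^2$-structure of $F^s(T)$ and $N^s(T)$ and the definition of the local spaces completes the proof.
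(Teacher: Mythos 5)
Your proposal is correct and follows essentially the same route as the paper: fix near-optimal extensions, decompose both factors and the output dyadically, split the frequency sum into the four interaction regimes $A_1$--$A_4$ matching Propositions \ref{prop:hilow}--\ref{prop:lll}, and close each regime by Cauchy--Schwarz against the $\ell^2$ dyadic structure of $F^{s}$ and $N^{s}$. The paper's own proof is in fact terser than yours, leaving to the reader exactly the summation bookkeeping (distribution of the weight $2^{sk_3}$ and absorption of the logarithmic losses $k_2$, $k_3$ by the exponential gains) that you spell out.
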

\begin{proof}
Since $P_kP_j=0$ if $k\neq j$ and $k,j\in \Z_+$, then we can fix
extensions $\wt{u}, \wt{v}$ of $u, v$ such that $\norm{P_k(\wt
u)}_{F_k}\leq 2\norm{P_k(u)}_{F_k(T)}$ and $\norm{P_k(\wt
v)}_{F_k}\leq 2\norm{P_k(v)}_{F_k(T)}$ for any $k\in \Z_+$. In view
of definition, we get
\begin{eqnarray*}
\norm{\partial_x(u v
)}_{N^{s}(T)}^2\les\sum_{k_3=0}^{\infty}2^{2sk_3}\norm{P_{k_3}(\partial_x(\wt
u \wt v))}_{N_{k_3}}^2.
\end{eqnarray*}
For $k\in \Z_+$ let $\wt{u}_{k}=P_{k}(\wt u)$ and
$\wt{v}_{k}=P_{k}(\wt v)$, then we get
\begin{eqnarray}\label{eq:bilineares1}
\norm{P_{k_3}(\partial_x(\wt u \wt v))}_{N_{k_3}}\les
\sum_{k_1,k_2\in \Z_+}\norm{P_{k_3}(\partial_x(\wt u_{k_1} \wt
v_{k_2}))}_{N_{k_3}}.
\end{eqnarray}
From symmetry we may assume $k_1\leq k_2$. Dividing the summation on
the right-hand side of \eqref{eq:bilineares1} into several parts, we
get
\begin{eqnarray}
\sum_{k_1,k_2\in \Z_+}\norm{P_{k_3}(\partial_x(\wt u_{k_1} \wt
v_{k_2}))}_{N_{k_3}}&\les& \sum_{i=1}^4
\sum_{{A_i}}\norm{P_{k_3}(\partial_x(\wt u_{k_1} \wt
v_{k_2}))}_{N_{k_3}}
\end{eqnarray}
where we denote
\begin{eqnarray*}
&&A_1=\{k_1\leq k_2: |k_2-k_3|\leq 5, k_1\leq k_2-10, \mbox{ and }
k_2\geq
20\};\\
&&A_2=\{k_1\leq k_2: |k_2-k_3|\leq 5, |k_1-k_2|\leq 10, \mbox{ and }
k_2\geq
20\};\\
&&A_3=\{k_1\leq k_2: k_3\leq k_2-10, |k_1-k_2|\leq 5, \mbox{ and }
k_1 \geq 20\}.\\
&&A_4=\{k_1\leq k_2: k_1,k_2,k_3\leq 200\}.
\end{eqnarray*}

For part (a), it suffices to prove that for $i=1,2,3,4$ then
\begin{eqnarray}
\normo{2^{sk_3}\sum_{{A_i}}\norm{P_{k_3}(\partial_x(\wt u_{k_1} \wt
v_{k_2}))}_{N_{k_3}}}_{l^2_{k_3}}\les \norm{\wt u}_{F^{s}}\norm{\wt
v}_{F^{1}}+\norm{\wt u}_{F^{1}}\norm{\wt v}_{F^{s}},
\end{eqnarray}
which follows from Proposition \ref{prop:hilow}-\ref{prop:lll}. For
part (b), it suffices to prove
\begin{eqnarray}\label{eq:bib}
\normo{\sum_{{k_1,k_2\in \Z_+}}\norm{P_{k_3}(\partial_x(\wt u_{k_1}
\wt v_{k_2}))}_{N_{k_3}}}_{l^2_{k_3}}\les \norm{\wt
v}_{F^{0}}\norm{\wt u}_{F^{1}}.
\end{eqnarray}
which follows in the same ways.
\end{proof}

\section{Proof of Theorem \ref{thmmain}}

In this section we devote to prove Theorem \ref{thmmain}. The main
ingredients are energy estimates which are proved in the next
section and short-time bilinear estimates obtained in the last
section. The method is due to Ionescu, Kenig and Tataru \cite{IKT}.
We will also need the local well-posedness for more regular
solution.
\begin{theorem}[Theorem 2, \cite{MoSaTz4}]\label{thmmst}
The KP-I initial-value problem \eqref{eq:kpI} is locally well-posed
in $H^{s,0}$ for $s>3/2$.
\end{theorem}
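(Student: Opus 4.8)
Since \cite{MoSaTz1} rules out any Picard/contraction scheme in $H^{s_1,s_2}$, the plan is the classical compactness route: solve a regularized equation, prove a priori bounds in $H^{s,0}$ uniform in the regularization on a time $T=T(\norm{\phi}_{H^{s,0}})$, pass to the limit, and obtain uniqueness and continuous dependence by separate arguments. For the approximation I would cut the nonlinearity off to frequencies $|\xi|\le\varepsilon^{-1}$, or add a parabolic term $-\varepsilon\partial_x^4u$; in either case the regularized problem is an ODE in $L^2$, hence globally solvable, with $\norm{u}_{L^2}$ still conserved and solutions lying in $C([-T,T]:H^{\sigma,0})$ for every $\sigma$. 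Everything then reduces to the $\varepsilon$-uniform energy estimate.

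Apply the Fourier multiplier $\jb{\partial_x}^s$ (symbol $\jb{\xi}^s$) to the equation and pair with $\jb{\partial_x}^su$ in $L^2(\R^2)$. The linear part contributes nothing: its symbol $i\omega(\xi,\mu)=i(\xi^3+\mu^2/\xi)$ is purely imaginary and commutes with $\jb{\partial_x}^s$, so the corresponding operator is skew-adjoint. A Kato--Ponce commutator in the $x$ variable gives $\jb{\partial_x}^s(u\,\partial_xu)=u\,\partial_x\jb{\partial_x}^su+E$ with $\norm{E}_{L^2_{xy}}\les\norm{\partial_xu}_{L^\infty_{xy}}\norm{u}_{H^{s,0}}$, and one integration by parts in $x$ turns the main term into $-\tfrac12\int(\partial_xu)(\jb{\partial_x}^su)^2$. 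Together with conservation of $\norm{u}_{L^2}$ this yields
\[
\frac{d}{dt}\norm{u}_{H^{s,0}}^2\les\norm{\partial_xu}_{L^\infty_{xy}}\,\norm{u}_{H^{s,0}}^2 .
\]
Here is the real difficulty: $H^{s,0}(\R^2)$ does not embed into $W^{1,\infty}$ for any $s$, because the Cauchy--Schwarz bound $\norm{\partial_xu}_{L^\infty_{xy}}\les\norm{\jb{\xi}^{-s}|\xi|}_{L^2_{\xi,\mu}}\norm{u}_{H^{s,0}}$ fails ($\int_{\R^2}\jb{\xi}^{-2s}\xi^2\,d\xi\,d\mu=\infty$). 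So the inequality cannot be closed by Sobolev embedding and Gronwall alone; the dispersive structure must be used.

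Instead of bounding $\norm{\partial_xu}_{L^\infty_{xy}}$ pointwise in $t$ I would estimate the offending cubic space--time integral over $[-T,T]$ directly, using dispersion for $W(t)$: a local smoothing bound of the form $\norm{\partial_xW(t)\phi}_{L^\infty_xL^2_{y,t}}\les\norm{\phi}_{L^2}$ for KP-I (a full $x$-derivative gained, which absorbs the $\partial_x$ in $\partial_x(u^2)$), an $L^4_{xyt}$-Strichartz estimate for $W(t)$, and, if necessary, a modified energy $\norm{u}_{H^{s,0}}^2+(\text{cubic correction})$ designed to cancel the worst term; these are transferred to $u$ through Duhamel's formula. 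In the dyadic analysis the dangerous contribution is the low--high interaction, and after the $L^2$ trilinear estimates of Lemma~\ref{lem:3zest} one is left with the half-derivative loss on the high output already noted after that lemma (the factor $2^{k_3/2}$ obtained from \eqref{eq:3zestb1} once $\partial_x$ is restored); absorbing that loss into the $H^{s,0}$ norm requires $s>3/2$, which is the origin of the threshold. With the uniform bound $\sup_{|t|\le T}\norm{u_\varepsilon(t)}_{H^{s,0}}\le 2\norm{\phi}_{H^{s,0}}$ in hand, weak-$*$ compactness together with the equation yields a solution $u\in L^\infty([-T,T]:H^{s,0})\cap C([-T,T]:H^{s',0})$ for $s'<s$; uniqueness and persistence of regularity come from the same energy identity applied to a difference of two solutions at a regularity low enough that $\norm{\partial_x(\cdot)}_{L^\infty}$ of one factor is controlled, and continuity of $\phi\mapsto u$ in $H^{s,0}$ follows from a Bona--Smith argument \cite{BonaSmith}, comparing $u$ with the solutions issued from $P_{\le N}\phi$ and letting $N\to\infty$.

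The step I expect to be hardest is the one forced by the anisotropy: reconciling the derivative in $\partial_x(u^2)$ with a space that carries no $y$-regularity. Unlike KP-II, whose resonance function is elliptic, the KP-I dispersion relation is degenerate --- the group velocity $\partial_\xi\omega=3\xi^2-\mu^2/\xi^2$ vanishes on the curve $\mu^2=3\xi^4$, which is precisely the KP-I resonant set (the factor $(\sqrt3\xi_1+\sqrt3\xi_2)^2-(\mu_1/\xi_1-\mu_2/\xi_2)^2$ appearing in Lemma~\ref{lem:3zest}) --- so the available smoothing and Strichartz estimates are genuinely weaker than in the KP-II case and cannot be boosted to the bilinear estimates a contraction scheme would require, consistently with the $C^2$ ill-posedness of \cite{MoSaTz1}. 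Making an energy argument absorb the leftover half-derivative deficit near this degenerate set is exactly what confines the method to $s>3/2$; getting down to $s=1$ is the role of the short-time $X^{s,b}$ spaces used in the present paper.
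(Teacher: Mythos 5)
First, a framing remark: the paper does not prove this statement at all. Theorem \ref{thmmst} is quoted as Theorem 2 of \cite{MoSaTz4} and is used purely as a black box to generate smooth local solutions (for data in $H^{\infty,0}$) with lifespan controlled by $\norm{\phi}_{H^{s,0}}$; the paper only notes that it is proved by refined energy methods. Your outline does point at the right kind of argument, the one used in the cited reference: regularization plus compactness, a Kato--Ponce commutator energy estimate taken in the $x$ variable only, dispersive control of the transport coefficient $\partial_x u$, and Bona--Smith for uniqueness/continuous dependence; and you correctly identify why the naive closure fails ($H^{s,0}$ carries no $y$-regularity, so it never embeds into $W^{1,\infty}$, and Picard iteration is excluded by \cite{MoSaTz1}).

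As a proof, however, there is a genuine gap precisely at the step you yourself flag as hardest. The whole content of the theorem is an estimate of the form $\int_0^T\norm{\partial_x u}_{L^\infty_{xy}}\,dt\leq C(T,\norm{u}_{L^\infty_TH^{s,0}})$ for $s>3/2$, uniform in the regularization, and you never state, let alone prove, a dispersive estimate that yields it: the local smoothing bound $\norm{\partial_x W(t)\phi}_{L^\infty_xL^2_{y,t}}\les\norm{\phi}_{L^2}$ gives an $L^2$ norm in $(y,t)$ rather than the $L^1_tL^\infty_{xy}$ norm your Gronwall argument requires, the $L^4_{xyt}$ Strichartz bound alone is far from enough, and the invoked ``modified energy if necessary'' is not an argument. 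What is actually needed (and what \cite{MoSaTz4} establishes) is a maximal-in-$(x,y)$ Strichartz-type estimate with a quantified loss of $x$-derivatives, and the threshold $s>3/2$ comes from balancing that loss against the available $H^{s,0}$ regularity. Your proposed origin of the threshold --- the $2^{k_3/2}$ factor in \eqref{eq:3zestb1} --- is not the right mechanism here: that factor quantifies the half-derivative gain of the ordinary $X^{s,b}$ iteration discussed after Lemma \ref{lem:3zest}, which is a different method from the energy scheme you are sketching. A smaller point: truncating only the nonlinearity in $\xi$ does not make the quadratic term bounded on $L^2$; you need a cutoff in both $\xi$ and $\mu$ (or the parabolic regularization) for the ODE/global-solvability step. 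So the proposal is a sensible roadmap consistent with the reference the paper cites, but the decisive estimate behind the theorem is missing.
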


Theorem \ref{thmmst} is proved by refined energy methods. The length
of existence interval $T$ is determined by $\norm{\phi}_{H^{s,0}}$.

\begin{proposition}\label{pFstoHs}
Let $s\geq 0$, $T\in (0,1]$, and $u\in F^{s}(T)$, then
\begin{equation}
\sup_{t\in [-T,T]}\norm{u(t)}_{{H}^{s,0}}\les\ \norm{u}_{F^{s}(T)}.
\end{equation}
\end{proposition}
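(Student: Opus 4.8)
The statement says that the short-time Bourgain-type norm $F^s(T)$ controls the energy norm $\sup_{t\in[-T,T]}\|u(t)\|_{H^{s,0}}$. The plan is to reduce to a single dyadic block and estimate $\sup_{t\in[-T,T]}\|P_k u(t)\|_{L^2}$ (for $k\ge 1$) and $\sup_t\|P_{\le 0}u(t)\|_{L^2}$ in terms of $\|P_k u\|_{F_k(T)}$, then square, multiply by $2^{2sk}$ and sum. Since $\|u\|_{H^{s,0}}^2 \approx \|P_{\le 0}u\|_{L^2}^2 + \sum_{k\ge 1}2^{2sk}\|P_k u\|_{L^2}^2$ and $\|u\|_{F^s(T)}^2 = \|P_{\le 0}u\|_{F_0(T)}^2 + \sum_{k\ge 1}2^{2sk}\|P_k u\|_{F_k(T)}^2$ by definition, it suffices to prove the single-frequency bound
\begin{eqnarray}\label{eq:FstoHsdyadic}
\sup_{t\in[-T,T]}\norm{P_k u(t)}_{L^2}\les \norm{P_k u}_{F_k(T)},\qquad k\in\Z_+,
\end{eqnarray}
with the convention $P_0=P_{\le 0}$ in the $k=0$ case.

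To prove \eqref{eq:FstoHsdyadic}, fix $k$ and $t_0\in[-T,T]$. Choose an extension $\wt u$ of $P_k u$ with $\|\wt u\|_{F_k}\le 2\|P_k u\|_{F_k(T)}$. By the definition of $F_k$, for this particular $t_0$ we have $\norm{\ft[\wt u\cdot \eta_0(2^k(t-t_0))]}_{X_k}\le \norm{\wt u}_{F_k}$. Now I would use the elementary fact that $X_k$ controls $C(\R:L^2)$ uniformly in time: for any $g\in X_k$,
\begin{eqnarray}\label{eq:XktoCL2}
\sup_{t\in\R}\norm{\ft^{-1}(g)(t)}_{L^2}\les \norm{g}_{X_k},
\end{eqnarray}
which follows because $\ft^{-1}(g)(t) = \int e^{it\tau}\ft^{-1}_{\xi,\mu}(g(\cdot,\cdot,\tau))\,d\tau$, so $\|\ft^{-1}(g)(t)\|_{L^2_{x,y}} \le \int \|g(\cdot,\cdot,\tau)\|_{L^2_{\xi,\mu}}\,d\tau$, and then breaking into the modulation pieces $\eta_j(\tau-\omega(\xi,\mu))$ and applying Cauchy--Schwarz in $\tau$ on each piece costs $2^{j/2}$, which is exactly summed by the $X_k$ norm (this is the same mechanism as \eqref{eq:pXk1}). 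Evaluating at $t=t_0$, where $\eta_0(2^k(t-t_0))=1$, gives $\norm{P_k u(t_0)}_{L^2} = \norm{\ft^{-1}(\ft[\wt u\cdot\eta_0(2^k(t-t_0))])(t_0)}_{L^2}\les \norm{\wt u}_{F_k}\le 2\norm{P_k u}_{F_k(T)}$. Since $t_0\in[-T,T]$ was arbitrary, \eqref{eq:FstoHsdyadic} follows.

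Finally, assembling: square \eqref{eq:FstoHsdyadic}, multiply by $2^{2sk}$ for $k\ge 1$, add the $k=0$ term, and sum over $k$; using $\sup_t\|u(t)\|_{H^{s,0}}^2 \approx \sup_t\big(\|P_{\le 0}u(t)\|_{L^2}^2 + \sum_{k\ge 1}2^{2sk}\|P_k u(t)\|_{L^2}^2\big) \le \|P_{\le 0}u\|_{F_0(T)}^2\cdot C + C\sum_{k\ge 1}2^{2sk}\|P_k u\|_{F_k(T)}^2 = C\norm{u}_{F^s(T)}^2$ yields the claim. The only mild subtlety is interchanging $\sup_t$ with the infinite sum over $k$, which is legitimate because the right-hand side bound \eqref{eq:FstoHsdyadic} is uniform in $t$; so one first takes the per-frequency supremum (getting the $F_k(T)$ norm as an upper bound independent of $t$) and only then sums in $k$. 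I do not expect a genuine obstacle here: the content is entirely the estimate \eqref{eq:XktoCL2}, which is a routine consequence of the $l^1$-type structure of $X_k$ already recorded in \eqref{eq:pXk1}, and the rest is bookkeeping with the Littlewood--Paley definitions of $F^s(T)$ and $H^{s,0}$.
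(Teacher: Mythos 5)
Your argument is correct and is essentially the paper's proof: reduce to a single dyadic block, evaluate $\wt{u}\cdot\eta_0(2^k(t-t_0))$ at $t=t_0$ (where the cutoff equals $1$), and control the resulting $\tau$-integral by the $X_k$ norm via the $l^1$-over-modulations structure, i.e.\ \eqref{eq:pXk1}. One small correction to your intermediate step: the quantity you need is $\normo{\int_\R|g(\xi,\mu,\tau)|\,d\tau}_{L^2_{\xi,\mu}}$ (apply the pointwise triangle inequality in $\tau$ first, then take $L^2_{\xi,\mu}$), which is exactly what \eqref{eq:pXk1} bounds; the reversed-order quantity $\int_\R\norm{g(\cdot,\cdot,\tau)}_{L^2_{\xi,\mu}}\,d\tau$ that you wrote down is genuinely larger and is \emph{not} controlled by $\norm{g}_{X_k}$, because for fixed modulation size $2^j$ the $\tau$-support of $\norm{\eta_j(\tau-\omega(\xi,\mu))g}_{L^2_{\xi,\mu}}$ is the union over $(\xi,\mu)$ of the individual supports and need not have measure $\lesssim 2^j$. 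Since you explicitly invoke the mechanism of \eqref{eq:pXk1}, this is a one-line fix and does not affect the validity of the proof.
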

\begin{proof}
 In view of the definitions, it suffices to prove that if $k\in
\Z_+$, $t_k\in [-1,1]$, and $\wt u_k \in F_k$ then
\begin{equation}
\norm{\ft[\wt u_k(t_k)]}_{L_{\xi,\mu}^2}\les \norm{\ft[\wt u_k\cdot
\eta_0(2^{k}(t-t_k))]}_{X_k}.
\end{equation}
Let $f_k=\ft[\wt u_k\cdot \eta_0(2^{k}(t-t_k))]$, then
\[\ft[\wt u_k(t_k)](\xi,\mu)=c\int_\R f_k(\xi,\mu,\tau)e^{it_k\tau}d\tau.\]
From the definition of $X_k$, we get that
\[ \norm{\ft[\wt
u_k(t_k)]}_{L^2}\les \normo{\int_\R
|f_k(\xi,\mu,\tau)|d\tau}_{L^2}\les \norm{f_k}_{X_k},\] which
completes the proof of the proposition.
\end{proof}

\begin{proposition}\label{prop:linear}
Assume $T\in (0,1]$, $u,v\in C([-T,T]:H^{\infty,0})$ and
\begin{eqnarray}\label{eq:lKPI}
u_t+\partial_x^3 u-\partial_x^{-1}\partial_y^2 u=v \mbox{ on } \R^2
\times (-T,T).
\end{eqnarray}
Then for any $s\geq 0$,
\begin{equation}\label{eq:linear}
\norm{u}_{F^{s}(T)}\les \ \norm{u}_{E^{s}(T)}+\norm{v}_{N^{s}(T)}.
\end{equation}
\end{proposition}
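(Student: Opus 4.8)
The plan is to reduce \eqref{eq:linear} to a single dyadic estimate and then run the short-time Duhamel argument of Ionescu--Kenig--Tataru. Since $\partial_t+\partial_x^3-\partial_x^{-1}\partial_y^2$ commutes with $P_k$, and by the Littlewood--Paley definition of $F^s(T),E^s(T),N^s(T)$, it suffices to prove for each $k\in\Z_+$ (with $u=P_ku$, $v=P_kv$, and with $P_k$ replaced by $P_{\le0}$ and $u(t_k)$ by $u(0)$ when $k=0$) that
\begin{equation}\label{eq:linearprfdyadic}
\norm{u}_{F_k(T)}\les\sup_{t_k\in[-T,T]}\norm{P_ku(t_k)}_{L^2}+\norm{v}_{N_k(T)};
\end{equation}
squaring \eqref{eq:linearprfdyadic}, multiplying by $2^{2sk}$ and summing then gives \eqref{eq:linear}. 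Fix an extension $\wt v$ of $v$ with $\norm{\wt v}_{N_k}\le2\norm{v}_{N_k(T)}$.

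For $k\ge1$ I would build an explicit extension $\wt u$ of $u$ from Duhamel's formula on time intervals of length $\sim2^{-k}$. Pick $\gamma\in\Sch(\R)$ supported in $[-1,1]$ with $\sum_{n\in\Z}\gamma(\cdot-n)\equiv1$; for every $n$ with $\supp\gamma(2^k\cdot-n)\cap[-T,T]\ne\emptyset$ choose $t_n$ in this set, and put
\[
\wt u=\sum_n\gamma(2^kt-n)\,\eta_0(2^{k-5}(t-t_n))\Big[W(t-t_n)u(t_n)+\int_{t_n}^tW(t-s)\wt v(s)\,ds\Big].
\]
Since $u\in C([-T,T]:H^{\infty,0})$ solves \eqref{eq:lKPI} classically and $\wt v=v$ on $[-T,T]$, for $t$ in the support of $\gamma(2^k\cdot-n)$ the bracket equals $u(t)$ and the factor $\eta_0(2^{k-5}(t-t_n))$ is $\equiv1$, so $\wt u=u$ on $[-T,T]$ and $\norm{u}_{F_k(T)}\le\norm{\wt u}_{F_k}$. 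To estimate $\norm{\wt u}_{F_k}=\sup_{t_k}\norm{\ft[\eta_0(2^k(t-t_k))\wt u]}_{X_k}$, note that for each fixed $t_k$ only $O(1)$ values of $n$ contribute; applying \eqref{eq:pXk3} to absorb the product $\eta_0(2^k(t-t_k))\gamma(2^kt-n)\eta_0(2^{k-5}(t-t_n))$ (all cutoffs at scale $2^{-k}$), matters reduce to the two single-interval bounds
\[
\norm{\ft[\eta_0(2^{k-5}(t-t_n))W(t-t_n)u(t_n)]}_{X_k}\les\norm{u(t_n)}_{L^2},
\]
\[
\Big\|\ft\Big[\eta_0(2^{k-5}(t-t_n))\int_{t_n}^tW(t-s)\wt v(s)\,ds\Big]\Big\|_{X_k}\les\norm{\wt v}_{N_k}.
\]
The first is immediate: after the change of variables $\theta=\tau-\omega(\xi,\mu)$ its left side equals $\norm{\wh{u(t_n)}}_{L^2}\sum_j2^{j/2}\norm{\eta_j(\theta)\wh\psi(\theta)}_{L^2_\theta}$ with $\psi(t)=\eta_0(2^{k-5}t)$, and the $j$-sum is $O(1)$ because $\wh\psi$ is Schwartz and concentrated at scale $2^k$.

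The inhomogeneous bound is the crux. The integrand matters only for $|s-t_n|\les2^{-k}$, so I would first replace $\wt v$ by $\eta_0(2^{k-C}(t-t_n))\wt v$, whose $N_k$-norm is $\les\norm{\wt v}_{N_k}$ (again by \eqref{eq:pXk2}--\eqref{eq:pXk3}), reducing to $\wt v$ time-supported near $t_n$ on a $2^{-k}$-scale. Then, via the identity
\[
\ft\Big[\psi(t)\int_0^tW(t-s)g(s)\,ds\Big](\xi,\mu,\tau)=c\int_\R\wh g(\xi,\mu,\tau')\,\frac{\wh\psi(\tau-\tau')-\wh\psi(\tau-\omega(\xi,\mu))}{\tau'-\omega(\xi,\mu)}\,d\tau',
\]
I would write $\wh g=(\tau'-\omega+i2^k)H$ with $\norm{H}_{X_k}\les\norm{g}_{N_k}$ (essentially the definition of $N_k$), split $(\tau'-\omega+i2^k)=(\tau'-\omega)+i2^k$, and check that the two resulting $\tau'$-kernels acting on $H$ are controlled by \eqref{eq:pXk1} and the convolution estimate \eqref{eq:pXk2} with $l=k$: the $(\tau'-\omega)$ part cancels the denominator and produces $\wh\psi\ast_\tau H$ plus $\wh\psi(\tau-\omega)\int H\,d\tau'$, while the $i2^k$ part yields a kernel dominated by $C2^{-k}(1+2^{-k}|\tau-\tau'|)^{-4}$ together with a harmless $\wh\psi(\tau-\omega)$-type term. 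This gives $\les\norm{H}_{X_k}\les\norm{g}_{N_k}\les\norm{\wt v}_{N_k}$ and finishes \eqref{eq:linearprfdyadic} for $k\ge1$. The case $k=0$ is analogous and simpler: as $T\le1$ no time subdivision is needed and one works directly with the $X^{s,b}$ structure of $X_0$ on $[-T,T]$ with a fixed cutoff. I expect the main obstacle to be precisely this inhomogeneous step --- matching the Duhamel kernel with the $(\tau-\omega+i2^k)^{-1}$ weight defining $N_k$ and with the kernel bounds \eqref{eq:pXk1}--\eqref{eq:pXk2} --- though it is by now routine in the Ionescu--Kenig--Tataru scheme and uses no special feature of the KP-I dispersion relation.
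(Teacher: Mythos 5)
Your proposal is correct and follows essentially the same route as the paper: reduce to a fixed-$k$ estimate, extend the solution via Duhamel on $2^{-k}$ time scales, and prove the single-window bound by the explicit identity for $\ft[\psi(t)\int_0^tW(t-s)g\,ds]$ together with the kernel bound matching the weight $(\tau-\omega(\xi,\mu)+i2^k)^{-1}$ and the properties \eqref{eq:pXk1}--\eqref{eq:pXk3}. The only (immaterial) difference is that the paper keeps $u$ itself on $[-T,T]$ and extends it only past $\pm T$, relying on the sup-over-windows definition of $F_k(T)$, rather than assembling $\wt u$ from a partition of unity of windowed Duhamel pieces as you do.
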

\begin{proof}
In view of the definitions, we see that the square of the right-hand
side of \eqref{eq:linear} is equivalent to
\begin{eqnarray*}
&&\norm{P_{\leq 0}(u(0))}_{L^2}^2+\norm{P_{\leq
0}(v)}_{N_k(T)}^2\\
&&+\sum_{k\geq 1}\big(\sup_{t_k\in
[-T,T]}2^{2sk}\norm{P_k(u(t_k))}_{L^2}^2
+2^{2sk}\norm{P_k(v)}_{N_k(T)}^2\big).
\end{eqnarray*}
Thus, from definitions, it suffices to prove that if $k\in \Z_+$ and
$u,v \in C([-T,T]:H^{\infty,0})$ solve \eqref{eq:lKPI}, then
\begin{eqnarray}\label{eq:retardlinear}
\left\{\begin{array}{l} \norm{P_{\leq 0}(u)}_{F_0(T)}\les
\norm{P_{\leq 0}(u(0))}_{L^2}+\norm{P_{\leq 0}(v)}_{N_0(T)};\\
\norm{P_k(u)}_{F_k(T)}\les \sup_{t_k\in
[-T,T]}\norm{P_k(u(t_k))}_{L^2}+\norm{P_k(v)}_{N_k(T)} \mbox{ if }
k\geq 1.
\end{array}
\right.
\end{eqnarray}

We only prove the second inequality in \eqref{eq:retardlinear},
since the first one can be treated in the same ways. Fix $k\geq 1$
and let $\wt v$ denote an extension of $P_k(v)$ such that $\norm{\wt
v}_{N_k}\leq C\norm{v}_{N_k(T)}$. In view of \eqref{eq:Sk}, we may
assume that $\wt v$ is supported in $\R\times
[-T-2^{-k-10},T+2^{-k-10}]$. For $t\geq T$ we define
\[\wt u (t)=\eta_0(2^{k+5}(t-T))\big[W(t-T)P_k(u(T))+\int_T^tW(t-s)(P_k(\wt v(s)))ds \big].\]
For $t\leq -T$ we define
\[\wt u (t)=\eta_0(2^{k+5}(t+T))\big[W(t+T)P_k(u(-T))+\int_{-T}^tW(t-s)(P_k(\wt v(s)))ds \big].\]
For $t\in [-T,T]$ we define $\wt u(t)=u(t)$. It is clear that $\wt
u$ is an extension of u and we get from \eqref{eq:Sk} that
\begin{eqnarray}\label{eq:extu}
\norm{u}_{F_k(T)}\les \sup_{t_k\in [-T,T]}\norm{\ft[\wt u \cdot
\eta_0(2^{k}(t-t_k))]}_{X_k}.
\end{eqnarray}

Now we prove the second inequality in \eqref{eq:retardlinear}. In
view of the definitions, \eqref{eq:extu} and \eqref{eq:pXk3}, it
suffices to prove that if $\phi_k \in L^2$ with $\widehat{\phi_k}$
supported in $I_k$, and $v_k\in N_k$ then
\begin{eqnarray}
\norm{\ft[u_k\cdot \eta_0(2^{k}t)]}_{X_k}\les
\norm{\phi_k}_{L^2}+\norm{(\tau-\omega(\xi,\mu)+i2^{k})^{-1}\cdot
\ft(v_k)}_{X_k},
\end{eqnarray}
where
\begin{equation}
u_k(t)=W(t)(\phi_k)+\int_0^tW(t-s)(v_k(s))ds.
\end{equation}
Straightforward computations show that
\begin{eqnarray*}
&&\ft[u_k\cdot \eta_0(2^{k}t)](\xi,\tau)=\widehat{\phi_k}(\xi)\cdot
2^{-k}\widehat{\eta_0}(2^{-k}(\tau-\omega(\xi,\mu)))\\
&&+C\int_\R \ft(v_k)(\xi,\tau')\cdot
\frac{\widehat{\eta_0}(2^{-k}(\tau-\tau'))-\widehat{\eta_0}(2^{-k}(\tau-\omega(\xi,\mu)))}{2^{k}(\tau'-\omega(\xi,\mu))}d\tau'.
\end{eqnarray*}
We observe now that
\begin{eqnarray*}
&&\aabs{\frac{\widehat{\eta_0}(2^{-k}(\tau-\tau'))-\widehat{\eta_0}(2^{-k}(\tau-\omega(\xi,\mu)))}{2^{k}(\tau'-\omega(\xi,\mu))}\cdot
(\tau'-\omega(\xi,\mu)+i2^{k})}\\
&&\les \
2^{-k}(1+2^{-k}|\tau-\tau'|)^{-4}+2^{-k}(1+2^{-k}|\tau-\omega(\xi,\mu)|)^{-4}.
\end{eqnarray*}
Using \eqref{eq:pXk1} and \eqref{eq:pXk2}, we complete the proof of
the proposition.
\end{proof}

Now we turn to prove Theorem \ref{thmmain}. We note that KP-I
equation \eqref{eq:kpI} is invariant under the following scaling
transform
\begin{eqnarray}\label{eq:scaling}
u(x,y,t)\rightarrow u_\lambda(x,y,t)=\lambda^2 u(\lambda x,
\lambda^2 y, \lambda^3 t).
\end{eqnarray}
Thus we see $\dot{H}^{s_1,s_2}$ is the critical space if
$s_1+2s_2=-1/2$ in the sense of scaling. To prove Theorem
\ref{thmmain} (a), by the scaling we may assume that
\begin{equation}\label{eq:smalldata}
\norm{u_0}_{H^{1,0}}\leq \epsilon_0 \ll 1.
\end{equation}
We only need to construct the solution on the time interval
$[-1,1]$. In view of Theorem \ref{thmmst}, it suffices to prove that
if $T\in (0,1]$ and $u\in C([-T,T]:H^{\infty,0})$ is a solution of
\eqref{eq:kpI} with $\norm{u_0}_{H^{1,0}}\leq \epsilon\ll 1$ then
\begin{eqnarray}\label{eq:H2est}
\sup_{t\in [-T,T]}\norm{u(t)}_{H^{2,0}}\les \norm{u_0}_{H^{2,0}}.
\end{eqnarray}

It follows from Proposition \ref{prop:linear}, Proposition
\ref{prop:bilinear} and the energy estimate Proposition
\ref{prop:energy} that for any $T'\in [0,T]$ we have
\begin{eqnarray}\label{eq:Fsest}
\left \{
\begin{array}{l}
\norm{u}_{F^{1}(T')}\les \norm{u}_{E^{1}(T')}+\norm{\partial_x(u^2)}_{N^{1}(T')};\\
\norm{\partial_x(u^2)}_{N^{1}(T')}\les \norm{u}_{F^{1}(T')}^2;\\
\norm{u}_{E^{1}(T')}^2\les
\norm{\phi}_{{H}^{1,0}}^2+\norm{u}_{F^{1}(T')}^3.
\end{array}
\right.
\end{eqnarray}
We denote
$X(T')=\norm{u}_{E^1(T')}+\norm{\partial_x(u^2)}_{N^{1}(T')}$. Then
by a similar argument as in the proof of Lemma 4.2 in \cite{IKT}, we
know $X(T')$ is continuous and satisfies
\[\lim_{T'\rightarrow 0}X(T')\les \norm{u_0}_{H^{1,0}}. \]
On the other hand, we get from \eqref{eq:Fsest} that
\begin{eqnarray*}
X(T')^2\les \norm{u_0}_{H^{1,0}}^2+X(T')^3+X(T')^4.
\end{eqnarray*}
If $\epsilon_0$ is sufficiently small, then we can get from
\eqref{eq:smalldata}, the continuity of $X(T)$ and the standard
bootstrap that $X(T')\les \norm{u_0}_{H^{1,0}}$ and therefore we
obtain
\begin{eqnarray}\label{eq:smallFs}
\norm{u}_{F^{1,0}(T)}\les \norm{u_0}_{H^{1,0}}.
\end{eqnarray}

For $\sigma\geq 1$ we obtain from Proposition \ref{prop:linear},
Proposition \ref{prop:bilinear} (a) and the energy estimate
Proposition \ref{prop:energy} that for any $T'\in [0,T]$ we have
\begin{eqnarray}\label{eq:Fsest2}
\left \{
\begin{array}{l}
\norm{u}_{F^{\sigma}(T')}\les \norm{u}_{E^{\sigma}(T')}+\norm{\partial_x(u^2)}_{N^{\sigma}(T')};\\
\norm{\partial_x(u^2)}_{N^{\sigma}(T')}\les \norm{u}_{F^{\sigma}(T')}\norm{u}_{F^{1}(T')};\\
\norm{u}_{E^{\sigma}(T')}^2\les
\norm{\phi}_{{H}^{\sigma,0}}^2+\norm{u}_{F^{1}(T')}\norm{u}_{F^{\sigma}(T')}^2.
\end{array}
\right.
\end{eqnarray}
Then from \eqref{eq:smallFs} we get $\norm{u}_{F^1(T)}\ll 1$ and
hence
\begin{eqnarray}
\norm{u}_{F^\sigma(T)}\les \norm{u_0}_{H^{\sigma,0}},
\end{eqnarray}
which in particularly implies \eqref{eq:H2est} as desired. We
complete the proof of part (a).

We prove now Theorem \ref{thmmain} (b), using the Bona-Smith
argument \cite{BonaSmith} as in \cite{IKT}. Fixing $u_0\in H^{1,0}$,
then we choose $\{\phi_n\}\subset H^{\infty,0}$ such that
$\lim_{n\rightarrow \infty} \phi_n=u_0$ in $H^{1,0}$. It suffices to
prove the sequence $S_T^\infty(\phi_n)$ is a Cauchy sequence in
$C([-T,T]:H^{1,0})$. From the definition it suffices to prove that
for any $\delta>0$ there is $M_\delta$ such that
\[\sup_{t\in [-T,T]}\norm{S_T^\infty(\phi_m)-S_T^\infty(\phi_n)}_{H^{1,0}}\leq \delta, \quad \forall\ m,n\geq M_\delta.\]
For $K\in \Z_+$ let $\phi_n^K=P_{\leq K}\phi_n$. Since
$\phi_n^K\rightarrow u_0^K$ in $H^{2,0}$, then we see for any fixed
$K$ there is $M_{\delta,K}$ such that
\[\sup_{t\in [-T,T]}\norm{S_T^\infty(\phi_m^K)-S_T^\infty(\phi_n^K)}_{H^{1,0}}\leq \delta/2, \quad \forall\ m,n\geq M_{\delta,K}.\]
On the other hand, we get from Proposition \ref{prop:energydiff} and
Lemma \ref{pFstoHs} that
\begin{eqnarray*}
\sup_{t\in[-T,T]}\norm{S_T^\infty(\phi_n)-S_T^\infty(\phi_n^K)}_{H^{1,0}}
&\les&\norm{S_T^\infty(\phi_n)-S_T^\infty(u_n^K)}_{F^1(T)}\\
&\les&\norm{\phi_n-\phi_n^K}_{H^{1,0}}+\norm{\phi_n^K}_{H^{2,0}}\norm{\phi_n-\phi_n^K}_{L^2}\\
&\les& \norm{\phi-\phi_n}_{H^{1,0}}+\norm{\phi-\phi^K}_{H^{1,0}}.
\end{eqnarray*}
Thus we obtain that for any $\delta>0$ there are $K$ and $M_\delta$
such that
\[\sup_{t\in
[-T,T]}\norm{S_T^\infty(\phi_n)-S_T^\infty(\phi_n^K)}_{H^{1,0}}\leq
\delta/2, \quad \forall\ n\geq M_\delta.\] Therefore, we complete
the proof of part (b) of Theorem \ref{thmmain}.

\section{Energy Estimates}

In this section we prove the energy estimates, by following the
ideas in \cite{IKT}. We introduce a new Littlewood-Paley
decomposition with smooth symbols. With
\[\chi_k(\xi)=\eta_0(\xi/2^k)-\eta_0(\xi/{2^{k-1}}), \quad k\in \Z,\]
Let $\widetilde{P}_k$ denote the operator on $L^2(\R)$ defined by
the Fourier multiplier $\chi_k(\xi)$ and similarly define the
operator $\wt{P}_{\leq k}$. Assume that $u,v\in C([-T,T];L^2)$ and
\begin{equation*}
\begin{cases}
\partial_tu+\partial_x^3u-\partial_x^{-1}\partial_y^2u=v \text{ on }\mathbb{R}^2_{x,y}\times\mathbb{R}_t;\\
u(0)=\phi,
\end{cases}
\end{equation*}
Then we multiply by $u$ and integrate to conclude that
\begin{equation}\label{eq:L2esti}
\sup\limits_{|t_k|\leq T}\norm{u(t_k)}_{L^2}^2\leq
\norm{\phi}_{L^2}^2+\sup\limits_{|t_k|\leq
T}\aabs{\int_{\R\times[0,t_k]}u\cdot v \ dxdydt}.
\end{equation}
In applications we usually take $v=\partial_x(u^2)$. This particular
term has a cancelation that we need to exploit.

\begin{lemma}\label{lem:3linear}
(a) Assume $T\in (0,1]$, $k_1,k_2,k_3 \in \Z_+$, and $u_i\in
F_{k_i}(T), i=1,2,3$. Then if $k_{min}\leq k_{max}-5$, we have
\begin{eqnarray}\label{eq:3lineara}
\aabs{\int_{\R^2\times [0,T]}u_1u_2u_3\ dxdydt}\les 2^{-k_{min}/2}
\prod_{i=1}^3 \norm{u_i}_{F_{k_i}(T)}.\label{eq:tri1}
\end{eqnarray}

(b) Assume $T\in (0,1]$, $ k\in \Z_+$, $0\leq k_1\leq k-10$, $u\in
F_k(T)$, and $v\in F_{k_1}(T)$. Then we have
\begin{eqnarray}\label{eq:3linearb}
\aabs{\int_{\R^2\times
[0,T]}\widetilde{P}_k(u)\widetilde{P}_k(\partial_x u \cdot
\widetilde{P}_{k_1}(v))dxdydt}\les 2^{k_{1}/2}
\norm{v}_{F_{k_1}(T)}\sum_{|k'-k|\leq
10}\norm{\wt{P}_{k'}(u)}_{F_{k'}(T)}^2.
\end{eqnarray}
If $k_1=0$, \eqref{eq:3linearb} also holds if $\widetilde{P}_{k_1}$
is replaced by $\widetilde{P}_{\leq 0}$.
\end{lemma}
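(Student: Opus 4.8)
The plan is to reduce the trilinear space-time integral estimates to the dyadic $L^2$ bilinear estimates of Lemma~\ref{lem:3zest} via a time-localization argument, exactly in the spirit of the short-time bilinear estimates in Section~4. For part (a), I would first use a partition of unity in time on the scale $2^{-k_{max}}$: write $1=\sum_m \gamma(2^{k_{max}}t-m)$ with $\gamma$ smooth and compactly supported, so that on each piece each factor $u_i$ may be replaced by $\eta_0(2^{k_{max}}(t-t_m))u_i$, whose Fourier transform lies in $X_{k_i}$ with norm controlled by $\norm{u_i}_{F_{k_i}(T)}$ (using the definition of $F_{k_i}$ and \eqref{eq:pXk3}). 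Since the spatial frequency supports force $\xi_1+\xi_2+\xi_3=0$, the number of nonzero terms $m$ on $[0,T]\subset[-1,1]$ is $O(2^{k_{max}})$ if we are not careful; but here the key point is that the product $u_1u_2u_3$ is supported in time in $[0,T]$, and more importantly we only need to sum over $O(2^{k_{max}-k_{min}})$... actually the cleaner route: localize on the coarser scale $2^{-k_{min}}$ is wrong since $X_{k_i}$ structure wants scale $2^{-k_i}$. The correct mechanism, following \cite{IKT}, is to localize at scale $2^{-k_{max}}$, observe there are at most $O(2^{k_{max}})$ intervals meeting $[0,T]$, but we gain because on each interval we apply the dyadic bound \eqref{eq:3zesta} which produces a factor $2^{(j_1+j_2+j_3)/2}2^{-j_{max}/2}$ together with the $l^1_j$-summability built into $X_k$; after summing the $j$'s one is left with $2^{k_{max}}\cdot 2^{-k_{max}}\cdot(\text{gain from }2^{-(k_1+k_2+k_3)/2})$. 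Combining $\min(2^{-(k_1+k_2+k_3)/2},2^{-j_{max}/2})$ in \eqref{eq:3zesta} with the constraint $j_i\geq$ nothing a priori, one extracts exactly the factor $2^{-k_{min}/2}$ claimed (the two high frequencies are comparable so $2^{-(k_1+k_2+k_3)/2}\sim 2^{-k_{max}}2^{-k_{min}/2}$, and the $2^{k_{max}}$ from counting intervals cancels).

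More concretely, for (a) after the time localization I would be reduced to showing, for $f_{k_i,j_i}$ supported in $D_{k_i,\le j_i}$,
\begin{eqnarray*}
2^{k_{max}}\sum_{j_1,j_2,j_3\ge 0}\int_{\R^3}(f_{k_1,j_1}*f_{k_2,j_2})\cdot f_{k_3,j_3}\les 2^{-k_{min}/2}\prod_{i=1}^3 2^{j_i/2}\norm{f_{k_i,j_i}}_{L^2},
\end{eqnarray*}
and plugging \eqref{eq:3zesta} with the choice $2^{-(k_1+k_2+k_3)/2}$ gives $2^{k_{max}}2^{(j_1+j_2+j_3)/2}2^{-(k_1+k_2+k_3)/2}\prod\norm{f_{k_i,j_i}}_{L^2}$; since $|k_{max}-k_{med}|\le 5$ under $k_{min}\le k_{max}-5$ this is $\les 2^{-k_{min}/2}\prod 2^{j_i/2}\norm{f_{k_i,j_i}}_{L^2}$, and the geometric $j$-sums are absorbed by the $l^1_j$-structure. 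One subtlety: the extra factor $2^{k_{max}}$ from counting the $O(2^{k_{max}})$ time-intervals on $[0,1]$ is genuinely there, which is why we must use the $2^{-(k_1+k_2+k_3)/2}$ branch and not the $2^{-j_{max}/2}$ branch; this also explains why the hypothesis $k_{min}\le k_{max}-5$ (rather than all frequencies comparable) is needed — it guarantees two of the frequencies are high and comparable so the $\frac{1}{2}(k_1+k_2+k_3)$ gain beats $k_{max}$.

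For part (b), the integrand has the special structure $\widetilde P_k(u)\cdot\widetilde P_k(\partial_x u\cdot \widetilde P_{k_1}v)$ with a high-high-low spatial interaction ($|k'-k|\le 10$ on the two high pieces, $k_1\le k-10$ low), and here I would \emph{not} use \eqref{eq:3zesta} directly because it would only give $2^{-k/2}$ from the low frequency, not the $2^{k_1/2}$ we want; instead one must exploit the commutator-type cancellation. Following \cite{IKT}, I would write $\partial_x\widetilde P_k(u)\cdot\widetilde P_k(u)=\frac12\partial_x(\widetilde P_k u)^2$ modulo terms where the derivative and the projections are redistributed, integrate by parts in $x$ to move $\partial_x$ onto $\widetilde P_{k_1}v$, which costs only $2^{k_1}$ instead of $2^k$. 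After this integration by parts the remaining trilinear expression has no derivative loss at the high frequencies and can be estimated by the same time-localization-plus-\eqref{eq:3zesta} machinery as in (a), yielding $2^{k_1/2}\norm{v}_{F_{k_1}(T)}\sum_{|k'-k|\le 10}\norm{\widetilde P_{k'}u}_{F_{k'}(T)}^2$.

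I expect the main obstacle to be part (b): making the cancellation precise. The operators $\widetilde P_k$ are Fourier multipliers with smooth (not sharp) symbols $\chi_k$, so $\widetilde P_k(\partial_x u\cdot\widetilde P_{k_1}v)$ is not literally $\partial_x\widetilde P_k u\cdot\widetilde P_{k_1}v$; one gets commutator terms $[\widetilde P_k,\widetilde P_{k_1}v]\partial_x u$. Controlling these commutators — showing they enjoy the same $2^{k_1/2}$ bound — requires a careful symbol expansion (a discrete analogue of $[\widetilde P_k, b]$ gaining one derivative, i.e. behaving like $2^{-k}\partial_x b$) combined with the $F_{k_1}$-norm of $v$ controlling $\norm{\partial_x v}$ at the low frequency $2^{k_1}$. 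The bookkeeping of which piece carries the derivative, together with the $O(2^k)$-many time intervals, is the delicate part; everything else is a routine application of Lemma~\ref{lem:3zest} and the $X_k$-space properties \eqref{eq:pXk1}--\eqref{eq:pXk3}.
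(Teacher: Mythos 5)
Your overall strategy for part (a) --- localize in time at scale $2^{-k_{max}}$, pay a factor $2^{k_{max}}$ for the number of intervals, and recoup it from the dyadic bound of Lemma \ref{lem:3zest} --- is the right framework and is essentially what the paper (following \cite{IKT}) does. For $k_{min}\geq 1$ your bookkeeping is correct: $2^{k_{max}}2^{-(k_1+k_2+k_3)/2}\sim 2^{-k_{min}/2}$ since the two high frequencies are comparable, and the $l^1_j$ structure of $X_k$ absorbs the $2^{j_i/2}$ factors. The paper simply cites \cite{IKT} for that case.

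However, there is a genuine gap in the one case the paper actually proves in detail, namely $k_{min}=0$. The block $P_{\leq 0}$ is supported in $|\xi|\leq 2$, not in a single dyadic annulus, so before you can apply \eqref{eq:3zesta} you must decompose it homogeneously, $f_{0,j_1}=\sum_{k'\leq 0}\chi_{k'}(\xi)f_{0,j_1}$. At that point the branch $2^{-(k_1+k_2+k_3)/2}$ of \eqref{eq:3zesta} produces the factor $2^{-k'/2}$, and $\sum_{k'\leq 0}2^{-k'/2}$ diverges. This is exactly why the paper does \emph{not} use \eqref{eq:3zesta} here but instead splits on $j_{max}\lessgtr k'+k_2+k_3$ and invokes \eqref{eq:3zestb1} (which carries $2^{k'/2}$, a \emph{positive} power) and the interpolated bound \eqref{eq:3zestbextra} (which carries $2^{k'/6}$), so that the sum over $k'\leq 0$ converges. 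Your proposal, as written, does not survive this case. Two further points you leave untreated: (i) the sharp cutoff $1_{[0,T]}$ destroys the $l^1_j$ summability on the $O(1)$ boundary intervals, where one only retains $\sup_j 2^{j/2}\|\cdot\|_{L^2}$ and must use the $2^{-j_{max}/2}$ branch of \eqref{eq:3zesta} to sum the modulations --- the paper isolates these intervals as the set $A$; and (ii) the all-low-frequency case $k_{max}\leq 10$, which the paper handles separately via the Strichartz estimate \eqref{eq:Xkembedding} rather than the dyadic machinery.

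For part (b), your identification of the commutator/integration-by-parts mechanism (moving $\partial_x$ onto the low-frequency factor at cost $2^{k_1}$, then controlling the commutators $[\widetilde P_k,\widetilde P_{k_1}v]\partial_x u$, which gain $2^{-k}$) is the correct idea and is the content of the corresponding lemma in \cite{IKT}; the paper itself does not reprove it but cites \cite{IKT} and only remarks that the $\widetilde P_{\leq 0}$ variant follows by decomposing the low frequency. You explicitly defer the commutator bookkeeping, which is the entire substance of (b), so that part remains a sketch rather than a proof.
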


\begin{proof}
For part (a), from symmetry we may assume $k_1\leq k_2\leq k_3$.  We
fix extension $\wt{u}_i \in F_{k_i}$ such that
$\norm{\wt{u}_i}_{F_{k_i}}\leq 2\norm{u_i}_{F_{k_i}(T)}$, $i=1,2,3$.
If $k_{3}\leq 10$, then
\begin{eqnarray*}
\aabs{\int_{\R^2\times [0,T]}u_1u_2u_3\ dxdydt}\les
\norm{u_3\eta_0(t)}_{L^2}\norm{u_1\eta_0(t)}_{L^4}\norm{u_2\eta_0(t)}_{L^4}
\end{eqnarray*}
It is easy to see that $\norm{u_3\eta_0(t)}_{L^2}\les
\norm{u_3}_{X_{k_3}}$. On the other hand, we also know for all $u\in
X_k$ then
\begin{eqnarray}\label{eq:Xkembedding}
\norm{\ft^{-1}(u)}_{L^4_{x,y,t}}\les \norm{u}_{X_k}
\end{eqnarray}
which suffices to prove (a) in this case in view of \eqref{eq:pXk3}.
To prove \eqref{eq:Xkembedding}, we have
\begin{eqnarray*}
\ft^{-1}(u)&=&\int_{\R^3} u(\xi,\mu,\tau)e^{ix\xi+iy\mu+it\tau}d\xi
d\mu d\tau\\
&=&\int_{\R^3}
u(\xi,\mu,\tau+\dr(\xi,\mu))e^{ix\xi+iy\mu+it\tau}e^{it\dr(\xi,\mu)}d\xi
d\mu d\tau.
\end{eqnarray*}
Using the Strichartz estimate $\norm{W(t)\phi}_{L^4}\les
\norm{\phi}_{L^2}$ in \cite{ArSa}, we immediately get
\eqref{eq:Xkembedding}.

We consider now $k_3\geq 10$. In order for the integral to be
nontrivial we must have $|k_2-k_3|\leq 4$. If $k_1\geq 1$, this is
proved in \cite{IKT}. We only need to prove the case $k_1=0$. Let
$\gamma:\R \rightarrow [0,1]$ denote a smooth function supported in
$[-1,1]$ with the property that
\[\sum_{n\in \Z} \gamma^3(x-n)\equiv 1, \quad x\in \R.\]
The left-hand side of \eqref{eq:tri1} is dominated by
\begin{eqnarray}\label{eq:3linearaprf1}
\sum_{|n|\leq C2^{k_3}}\bigg|\int_{\R\times \R} \big(
\gamma(2^{k_3}t-n)\wt{u}_1\big)\big(\gamma(2^{k_3}t-n)\wt{u}_2
\big)\big(\gamma(2^{k_3}t-n)1_{[0,T]}(t)\wt{u}_3\big) dxdydt\bigg|.
\end{eqnarray}
We observe first that
\[|A|=|\{n: \gamma(2^{k_3}t-n)1_{[0,T]}(t) \mbox{ nonzero and } \ne \gamma(2^{k_3}t-n)\}|\leq 4.\]

For the summation of $n\in A^c$ on the left-hand side of
\eqref{eq:3linearaprf1}, as was explained in the proof of
Proposition \ref{prop:hilow}, for \eqref{eq:3lineara} it suffices to
prove that if $f_{k_i,j_i}$ are $L^2$ functions supported in
${D}_{k_i,\leq j_i}$ for $i=2,3$ and $f_{0,j_1}$ is a $L^2$ function
supported in ${D}_{\leq 0,\leq j_1}$, then
\begin{eqnarray}\label{eq:3linearaprf2}
2^{k_3}\sum_{j_1,j_2,j_3\geq
k_3}|J(f_{k_1,j_1},f_{k_2,j_2},f_{k_3,j_3})|\les \sum_{j_i\geq 0}
\prod_{i=1}^3 2^{j_i/2}\norm{f_{k_i,j_i}}_{2}.
\end{eqnarray}
Decomposing the low frequency $f_{0,j_1}=\sum_{k'\leq
0}f_{k',j_1}=\sum_{k'\leq 0}\chi_{k'}(\xi)f_{0,j_1}$, then we see
\[2^{k_3}\sum_{j_1,j_2,j_3\geq
k_3}|J(f_{k_1,j_1},f_{k_2,j_2},f_{k_3,j_3})|\leq 2^{k_3}\sum_{k'\leq
0}\sum_{j_1,j_2,j_3\geq
k_3}|J(f_{k',j_1},f_{k_2,j_2},f_{k_3,j_3})|\] If $j_{max}\leq
k'+k_2+k_3-10$, then using \eqref{eq:3zesta}, we get that
\[2^{k_3}\sum_{k'\leq 0}\sum_{j_1,j_2,j_3\geq
k_3}2^{-k_2/2}2^{k'/2}2^{j_1/2}2^{j_2/2}\prod_{i=1}^3
\norm{f_{k_i,j_i}}_{2}\les \sum_{j_i\geq 0} \prod_{i=1}^3
2^{j_i/2}\norm{f_{k_i,j_i}}_{2}.\] On the other hand, if
$j_{max}\geq k'+k_2+k_3-10$, then using \eqref{eq:3zestbextra}, we
get that
\[2^{k_3}\sum_{k'\leq
0}\sum_{j_1,j_2,j_3\geq k_3}2^{(j_1+j_2)/2}2^{j_{3}/6}
2^{-2k_3/3}2^{k'/6}\prod_{i=1}^3 \norm{f_{k_i,j_i}}_{2}\les
\sum_{j_i\geq 0} \prod_{i=1}^3 2^{j_i/2}\norm{f_{k_i,j_i}}_{2}.\]
Thus the summation of $n\in A^c$ is under control.

For the summation of $n\in A$, we observe that if $I\subset \R$ is
an interval, $k\in \Z_+$, $f_k\in X_k$, and $f_k^I=\ft(1_I(t)\cdot
\ft^{-1}(f_k))$ then
\begin{eqnarray*}
\sup_{j\in \Z_+}2^{j/2}\norm{\eta_j(\tau-\omega(\xi))\cdot
f_k^I}_{L^2}\les \norm{f_k}_{X_k}.
\end{eqnarray*}
Thus using \eqref{eq:3zesta} and as for summation on $A^c$, we get
the bound as desired.

For part(b), \eqref{eq:3linearb} is proved in \cite{IKT} for all
$k_1\in \Z$. For $k_1=0$ and $\wt{P}_{\leq 0}$ in
\eqref{eq:3linearb}, we could decompose the low frequency and then
apply \eqref{eq:3linearb}.
\end{proof}

\begin{proposition}\label{prop:energy}
Assume that $T\in (0,1]$ and $u\in C([-T,T]:H^{\infty,0})$ is a
solution to \eqref{eq:kpI} on $\R\times(-T,T)$. Then for $s\geq 1$
we have
\begin{eqnarray}\label{eq:energy}
\norm{u}_{E^s(T)}^2\les
\norm{u_0}_{H^{s,0}}^2+\norm{u}_{F^{1}(T)}\norm{u}_{F^{s}(T)}^2.
\end{eqnarray}
\end{proposition}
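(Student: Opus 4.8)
The plan is to start from the $L^2$-type energy identity \eqref{eq:L2esti} applied to the frequency-localized equation. Fix $s\ge 1$ and $k\ge 1$. Applying $\wt P_k$ to \eqref{eq:kpI} we get that $\wt P_k u$ solves the linear equation with right-hand side $v=-\wt P_k\partial_x(u^2)$, so \eqref{eq:L2esti} gives
\begin{eqnarray*}
\sup_{|t_k|\le T}\norm{\wt P_k u(t_k)}_{L^2}^2\les \norm{\wt P_k u_0}_{L^2}^2
+\sup_{|t_k|\le T}\aabs{\int_{\R^2\times[0,t_k]}\wt P_k u\cdot \wt P_k\partial_x(u^2)\,dxdydt}.
\end{eqnarray*}
Multiplying by $2^{2sk}$ and summing over $k$ (and adding the trivial $k=0$ piece), it suffices to control $\sum_{k\ge 1}2^{2sk}\aabs{\int \wt P_k u\cdot\wt P_k\partial_x(u^2)}$ by $\norm{u}_{F^1(T)}\norm{u}_{F^s(T)}^2$. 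Expanding $u^2=\sum_{k_1,k_2}u_{k_1}u_{k_2}$ (using $\wt P_{k_i}$-type decompositions) and writing $\partial_x(u_{k_1}u_{k_2})$, I split the sum according to the relative sizes of $k_1,k_2$ and the output frequency $k$.

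The two easy regimes are handled by Lemma \ref{lem:3linear}(a): whenever the three frequencies involved in a trilinear term $\int u_{k_1}u_{k_2}\wt P_k u$ are not all comparable, i.e. $k_{min}\le k_{max}-5$, we gain $2^{-k_{min}/2}$, and the derivative $\partial_x$ costs only $2^{k_{max}}\sim 2^k$; together with the $l^2$-summability built into the $F^s$ and $F^1$ norms this closes those cases with room to spare, distributing one factor of $2^{sk}$ onto the high-frequency input in $F^s$ and estimating the remaining input in $F^1$. The genuinely delicate regime is the \emph{high-high-to-low} / comparable-frequency interaction where $k_1,k_2,k$ are all within a bounded multiple of each other, since here Lemma \ref{lem:3linear}(a) does not apply and a naive estimate loses a full derivative. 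This is exactly where the divergence structure of the nonlinearity must be used: after the paraproduct decomposition the leading term is of the form $\int \wt P_k u\cdot \wt P_k(\partial_x u\cdot \wt P_{k_1}v)$ with $v$ at comparatively low frequency, which is precisely the quantity estimated in Lemma \ref{lem:3linear}(b), giving the bound $2^{k_1/2}\norm{v}_{F_{k_1}(T)}\sum_{|k'-k|\le10}\norm{\wt P_{k'}u}_{F_{k'}(T)}^2$. Summing $2^{2sk}$ times this over $k$ and over the low frequency $k_1$ (the $2^{k_1/2}$ and $l^2$-orthogonality make the $k_1$-sum converge), and bounding $\sum_{k_1}2^{k_1/2}\norm{v_{k_1}}_{F_{k_1}(T)}$ by $\norm{v}_{F^1(T)}$ with $v=u$, yields the term $\norm{u}_{F^1(T)}\norm{u}_{F^s(T)}^2$.

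The remaining bookkeeping is to make the commutator $[\wt P_k,\cdot]$ errors harmless: expanding $\wt P_k\partial_x(u_{k_1}u_{k_2})$ one writes it as $\partial_x u_{k_1}\cdot \wt P_k u_{k_2}+u_{k_1}\wt P_k\partial_x u_{k_2}+(\text{commutator})$, and the commutator, being smoothing of one derivative at scale $2^k$, is dominated by the already-treated non-resonant trilinear expressions via Lemma \ref{lem:3linear}(a); I would also note that the symmetrization $u_{k_1}\partial_x u_{k_2}+u_{k_2}\partial_x u_{k_1}=\partial_x(u_{k_1}u_{k_2})$ is what produces the extra low-frequency gain in the worst term. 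The $k=0$ / low-output contribution is controlled by the same arguments after an innocuous dyadic decomposition of the low frequency, exactly as in the proofs in Section 4. I expect the main obstacle to be organizing the comparable-frequency case cleanly so that the derivative loss is fully absorbed by the cancellation encoded in Lemma \ref{lem:3linear}(b), and checking that all the dyadic sums (over $k$, over $k_1$, and over the internal modulation parameters hidden inside the $F_k$ norms) are summable with the weight $2^{2sk}$; everything else is routine Littlewood--Paley trilinear estimation following \cite{IKT}.
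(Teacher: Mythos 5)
Your proposal follows essentially the same route as the paper: frequency-localize \eqref{eq:L2esti}, split the trilinear term $\int \wt{P}_k u\cdot \wt{P}_k(u\,\partial_x u)$ into the low-high piece, treated by Lemma \ref{lem:3linear}(b) (which encodes the divergence/commutator cancellation and yields the bound $2^{k_1/2}\norm{u}_{F_{k_1}(T)}\sum_{|k'-k|\leq 10}\norm{\wt{P}_{k'}u}_{F_{k'}(T)}^2$), and the remaining pieces, treated by Lemma \ref{lem:3linear}(a), then sum with the weights $2^{2sk}$. One caveat: your labeling of the regimes is inverted --- the case that Lemma \ref{lem:3linear}(a) cannot close is not the all-comparable one but precisely the low-high interaction $k_1\leq k-10$, $k_2\sim k$ with $\partial_x$ landing on the high frequency, where Lemma (a) only gives $2^{k}2^{-k_1/2}$ and hence loses a derivative; that is exactly the term $\int \wt{P}_k u\cdot \wt{P}_k(\partial_x u\cdot \wt{P}_{k_1}v)$ to which you (correctly) apply Lemma (b), while the genuinely comparable and high-high-to-low cases do close via Lemma (a) after redistributing the weight as $2^{2sk}\leq 2^{sk}2^{sk_{max}}$. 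Since you ultimately invoke the right lemma for the right term, the substance of your argument matches the paper's proof.
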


\begin{proof}
From definition we have
\begin{eqnarray*}
\norm{u}_{E^s(T)}^2-\norm{P_{\leq 0}(u_0)}_{L^2}^2\les \sum_{k\geq
1}\sup_{t_k\in [-T,T]}2^{2sk}\norm{\wt{P}_k(u(t_k))}_{L^2}^2.
\end{eqnarray*}
Then we can get from \eqref{eq:L2esti} that
\begin{eqnarray}\label{eq:energyeq}
2^{2sk}\norm{\wt{P}_k(u(t_k))}_{L^2}^2-2^{2sk}\norm{\wt{P}_k(u_0)}_{L^2}^2\les
2^{2sk}\left|\int_{\R\times [0,t_k]}\wt{P}_k(u)\wt{P}_k(u\cdot
\partial_x u)dxdt\right|.
\end{eqnarray}
It is easy to see that the right-hand side of \eqref{eq:energyeq} is
dominated by
\begin{eqnarray}\label{eq:trigoal}
&&C2^{2sk}\sum_{0\leq k_1\leq k-10}\left|\int_{\R\times
[0,t_k]}\wt{P}_k(u)\wt{P}_k(\wt{P}_{k_1}u\cdot\partial_x u)dxdt\right|\nonumber\\
&&+C2^{2sk}\sum_{k_1\geq k-9,k_2\in \Z_+}\left|\int_{\R\times
[0,t_k]}\wt{P}_k^2(u)\wt{P}_{k_1}(u)\cdot\partial_x \wt{P}_{k_2}(
u)dxdt\right|.
\end{eqnarray}
For the first term in \eqref{eq:trigoal},  using \eqref{eq:3linearb}
then we get that it is bounded by
\begin{eqnarray*}
&&C2^{2sk}\sum_{k_1\leq k-10}2^{k_1/2}
\norm{u}_{F_{k_1}(T)}\sum_{|k'-k|\leq
10}\norm{\wt{P}_{k'}(u)}_{F_{k'}(T)}^2\\
&&\les \norm{u}_{F^{\rev{2}+}(T)}2^{2sk}\sum_{|k'-k|\leq
10}\norm{u}_{F_{k'}(T)}^2
\end{eqnarray*}
which implies that the summation of the first term is bounded by
$\norm{u}_{F^{\rev{2}+}(T)}\norm{u}_{F^{s}(T)}^2$ as desired.

For the second term in \eqref{eq:trigoal}, using \eqref{eq:tri1} we
get that it is bounded by
\begin{eqnarray*}
&&C2^{2sk}\sum_{|k_1-k|\leq 10,k_2\leq k+10}2^{k_2/2}\norm{\wt{P}_k(u)}_{F_k(T)}\norm{\wt{P}_{k_1}(u)}_{F_{k_1}(T)}\norm{\wt{P}_{k_2}(u)}_{F_{k_2}(T)}\\
&&+C2^{2sk}\sum_{|k_1-k_2|\leq 10,k_1\geq
k+10}2^{k_2-k/2}\norm{\wt{P}_k(u)}_{F_k(T)}\norm{\wt{P}_{k_1}(u)}_{F_{k_1}(T)}\norm{\wt{P}_{k_2}(u)}_{F_{k_2}(T)}.
\end{eqnarray*}
Then it is easy to see that summation over $k_3\geq 1$ s bounded by
$\norm{u}_{F^{1}(T)}\norm{u}_{F^{s}(T)}^2$. We complete the proof of
the proposition.
\end{proof}

\begin{proposition}\label{prop:energydiff}

Let $u_1,u_2 \in F^{1}(1)$ be solutions to \eqref{eq:kpI} with
initial data $\phi_1,\phi_2 \in H^{\infty,0}$ satisfying
\begin{eqnarray}\label{eq:energydiffsmall}
\norm{u_1}_{F^{1}(1)}+\norm{u_2}_{F^{1}(1)}\leq \epsilon_0\ll 1.
\end{eqnarray}
Then we have
\begin{eqnarray}\label{eq:L2conti}
\norm{u_1-u_2}_{F^0(1)}\les \norm{\phi_1-\phi_2}_{L^2},
\end{eqnarray}
and
\begin{eqnarray}\label{eq:H1conti}
\norm{u_1-u_2}_{F^{1}(1)}\les
\norm{\phi_1-\phi_2}_{H^{1,0}}+\norm{\phi_1}_{H^{2,0}}\norm{\phi_1-\phi_2}_{L^2}.
\end{eqnarray}
\end{proposition}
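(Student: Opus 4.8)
The plan is to mimic the energy-estimate argument of Proposition \ref{prop:energy} applied to the difference equation. Write $w=u_1-u_2$ and $z=u_1+u_2$; then $w$ solves the linear-type equation
\[
\partial_t w+\partial_x^3 w-\partial_x^{-1}\partial_y^2 w=\tfrac12\partial_x(wz) \quad\text{on }\R^2\times(-1,1),
\]
with $w(0)=\phi_1-\phi_2$. By Proposition \ref{prop:linear} and Proposition \ref{prop:bilinear}(b), for any $T'\in(0,1]$ we have $\norm{w}_{F^0(T')}\les\norm{w}_{E^0(T')}+\norm{\partial_x(wz)}_{N^0(T')}$ and $\norm{\partial_x(wz)}_{N^0(T')}\les\norm{w}_{F^0(T')}\norm{z}_{F^1(T')}$, while the energy estimate gives $\norm{w}_{E^0(T')}^2\les\norm{\phi_1-\phi_2}_{L^2}^2+(\text{cubic terms in }w,z)$. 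The crucial point is that the cubic term here is $\int \wt P_k(w)\wt P_k(\partial_x(wz))$, whose high-low piece is controlled by Lemma \ref{lem:3linear}(b) and whose high-high piece by Lemma \ref{lem:3linear}(a); both produce a bound of the form $\norm{z}_{F^1}\norm{w}_{F^0}^2$ (one needs only $z\in F^1$ and $w\in F^0$ on the right since in the low-frequency triple interaction one puts the highest derivative on $z$). Feeding these into each other and using the smallness hypothesis \eqref{eq:energydiffsmall}, together with the continuity-in-$T'$ and bootstrap argument already invoked in the proof of Theorem \ref{thmmain}(a), yields $\norm{w}_{F^0(1)}\les\norm{\phi_1-\phi_2}_{L^2}$, which is \eqref{eq:L2conti}.

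For \eqref{eq:H1conti} I would run the same scheme at regularity $s=1$ for $w$. Proposition \ref{prop:linear} and Proposition \ref{prop:bilinear}(a) give $\norm{w}_{F^1(T')}\les\norm{w}_{E^1(T')}+\norm{w}_{F^1(T')}\norm{z}_{F^1(T')}$, and the $E^1$ energy estimate for the difference equation produces, besides $\norm{\phi_1-\phi_2}_{H^{1,0}}^2$, cubic terms of the shape $\norm{z}_{F^1}\norm{w}_{F^1}^2$ — which the smallness absorbs — \emph{plus} one genuinely dangerous term in which two derivatives fall on $w$ but the frequency is actually carried by $z$: schematically $2^{2k}\int\wt P_k(w)\wt P_k(\wt P_{k_1}(w)\,\partial_x\,\wt P_{k_2}(z))$ with $k_2\sim k\gg k_1$. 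Here one cannot afford $\norm{w}_{F^1}^2$; instead one estimates this contribution, via Lemma \ref{lem:3linear}(a)–(b), by $\norm{w}_{F^0}\norm{w}_{F^1}\norm{z}_{F^2}$, and then bounds $\norm{z}_{F^2(1)}\les\norm{u_1}_{F^2(1)}+\norm{u_2}_{F^2(1)}\les\norm{\phi_1}_{H^{2,0}}+\norm{\phi_2}_{H^{2,0}}$ by Theorem \ref{thmmain}(a) (the $H^{2,0}$ a priori bound \eqref{eq:H2est}), and uses \eqref{eq:L2conti} to replace $\norm{w}_{F^0}$ by $\norm{\phi_1-\phi_2}_{L^2}$. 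Collecting terms gives $\norm{w}_{F^1(1)}\les\norm{\phi_1-\phi_2}_{H^{1,0}}+(\norm{\phi_1}_{H^{2,0}}+\norm{\phi_2}_{H^{2,0}})\norm{\phi_1-\phi_2}_{L^2}$; since in the Bona–Smith application $\phi_2=\phi_n^K$ is the low-frequency truncation one may as well state it with $\norm{\phi_1}_{H^{2,0}}$ alone, as in \eqref{eq:H1conti}.

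The main obstacle is precisely the "two derivatives on $w$, frequency on $z$" term in the $E^1$ estimate for the difference: naively it is quadratic in $\norm{w}_{F^1}$ with no small constant, and the whole point of the Bona–Smith scheme is to trade one of those $w$-derivatives for an extra derivative on $z$ (paying $\norm{z}_{F^2}\sim\norm{\phi}_{H^{2,0}}$, which is large but finite) while gaining the factor $\norm{w}_{F^0}\sim\norm{\phi_1-\phi_2}_{L^2}$ from \eqref{eq:L2conti}. Making this rigorous requires checking that Lemma \ref{lem:3linear}(b) indeed lets one place $\wt P_{k_1}(w)$ in $F^0$ and push the derivative onto the high-frequency $z$-factor with only a $2^{k_1/2}$ loss (summable), and that the high-high analogue uses Lemma \ref{lem:3linear}(a) with the $2^{-k_{\min}/2}$ gain; everything else is the now-routine continuity/bootstrap packaging already used for Theorem \ref{thmmain}(a).
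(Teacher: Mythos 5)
Your treatment of \eqref{eq:L2conti} matches the paper's: the same difference equation for $v=u_1-u_2$, the same combination of Proposition \ref{prop:linear}, Proposition \ref{prop:bilinear}(b) and an $E^0$ energy estimate whose cubic terms are controlled by Lemma \ref{lem:3linear}, closed by the smallness \eqref{eq:energydiffsmall}. For \eqref{eq:H1conti} you have also correctly identified the single dangerous interaction (two derivatives landing on an undifferenced factor of the large solution) and the Bona--Smith trade $\norm{v}_{F^0}\cdot\norm{\phi}_{H^{2,0}}$ that resolves it, so the architecture is right.

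There is, however, a genuine gap at the end of the second part: your argument yields $\norm{u_1-u_2}_{F^1(1)}\les\norm{\phi_1-\phi_2}_{H^{1,0}}+(\norm{\phi_1}_{H^{2,0}}+\norm{\phi_2}_{H^{2,0}})\norm{\phi_1-\phi_2}_{L^2}$, which is strictly weaker than the stated \eqref{eq:H1conti}, and the remark that one ``may as well'' keep only $\norm{\phi_1}_{H^{2,0}}$ because of how the proposition is used later is not a proof of the stated inequality. Indeed, in the Bona--Smith application one of the two data is $\phi_n$ itself, whose $H^{2,0}$ norm has no bound uniform in $n$, so the symmetric version would not close that argument either. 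The paper obtains the asymmetric bound by an algebraic rearrangement you are missing: writing $v(u_1+u_2)/2=vu_1+v^2/2$ and differentiating, the equation for $U=P_{\geq -10}(\partial_x v)$ takes the form $\partial_t U+\partial_x^3U-\partial_x^{-1}\partial_y^2U=P_{\geq-10}(-u_2\,\partial_xU)+P_{\geq-10}(G)$, where the only term of $G$ carrying two derivatives on an undifferenced factor is $-v\,\partial_x^2u_1$; the self-interaction $-\partial_x^2(v^2/2)$ is absorbed into the transport term $-u_2\,\partial_xU$ and into $-\partial_xv\,\partial_x(u_1+u_2)$. The transport term is then handled by the commutator estimate Lemma \ref{lem:3linear}(b) and absorbed via \eqref{eq:energydiffsmall}, so only $\norm{u_1}_{F^{2}(1)}\les\norm{\phi_1}_{H^{2,0}}$ ever appears. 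With that rearrangement inserted, your proof goes through; without it you have only proved a weaker statement.
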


\begin{proof}
We prove first \eqref{eq:L2conti}. Let $v=u_2-u_1$, then $v$ solves
the equation
\begin{eqnarray}\label{eq:energydiffprf1}
\left\{
\begin{array}{l}
\partial_t v+\partial_x^3 v-\partial_x^{-1}\partial_y^2 v=-\partial_x[v(u_1+u_2)/2];\\
v(0)=\phi=\phi_2-\phi_1.
\end{array}
\right.
\end{eqnarray}
Then from Proposition \ref{prop:linear} and Proposition
\ref{prop:bilinear} (b) we obtain
\begin{eqnarray}\label{eq:energydiffprf1}
\left\{
\begin{array}{l}
\norm{v}_{F^0(1)}\les \norm{v}_{E^0(1)}+\norm{\partial_x[v(u_1+u_2)/2]}_{N^0(1)};\\
\norm{\partial_x[v(u_1+u_2)/2]}_{N^0(1)}\les
\norm{v}_{F^0(1)}(\norm{u_1}_{F^{1}(1)}+\norm{u_2}_{F^{1}(1)}).
\end{array}
\right.
\end{eqnarray}
We derive an estimate on $\norm{v}_{E^0(1)}$. As in the proof of
Proposition \ref{prop:energy}, we get from \eqref{eq:L2esti} that
\begin{eqnarray}\label{eq:energydiffprf2}
\norm{v}_{E^0(1)}^2-\norm{\phi}_{L^2}^2&\les&
\sum_{k\geq1}\left|\int_{\R\times
[0,t_k]}\wt{P}_k(v)\wt{P}_k(\partial_x £¨v£©\cdot
(u_1+u_2))dxdt\right|\nonumber\\
&& +\sum_{k\geq1}\left|\int_{\R\times
[0,t_k]}\wt{P}_k(v)\wt{P}_k(v\cdot
\partial_x (u_1+u_2))dxdt\right|.
\end{eqnarray}
For the first term on right-hand side of \eqref{eq:energydiffprf2},
using Lemma \ref{lem:3linear} we can bound it by
\begin{eqnarray*}
&&C\sum_{k\geq 1}\sum_{k_1\leq k-10}\left|\int_{\R\times
[0,t_k]}\wt{P}_k(v)\wt{P}_k(\partial_x v\cdot\wt{P}_{k_1}(u_1+u_2))dxdt\right|\\
&&+C\sum_{k\geq 1}\sum_{k_1\geq k-9,k_2\in \Z_+}\left|\int_{\R\times
[0,t_k]}\wt{P}_k^2(v)\partial_x\wt{P}_{k_2}(v)\cdot \wt{P}_{k_1}(
u_1+u_2)dxdt\right|\\
&&\les\norm{v}_{F^0(1)}^2(\norm{u_1}_{F^{1}(1)}+\norm{u_2}_{F^{1}(1)}),
\end{eqnarray*}
The second term on right-hand side of \eqref{eq:energydiffprf2} is
dominated by
\begin{eqnarray*}
&&\sum_{k\geq1}\sum_{k_1,k_2\in \Z_+}\left|\int_{\R\times
[0,t_k]}\wt{P}_k^2(v)\wt{P}_{k_1}(v)\cdot
\partial_x \wt{P}_{k_2}(u_1+u_2)dxdt\right|\\
&&\les
\norm{v}_{F^0(1)}^2(\norm{u_1}_{F^{1}(1)}+\norm{u_2}_{F^{1}(1)}).
\end{eqnarray*}
Therefore, we obtain the following estimate
\begin{eqnarray}
\norm{v}_{E^0(1)}^2\les
\norm{\phi}_{L^2}^2+\norm{v}_{F^0(1)}^2(\norm{u_1}_{F^{1}(1)}+\norm{u_2}_{F^{1}(1)}),
\end{eqnarray}
which combined with \eqref{eq:energydiffprf2} implies
\eqref{eq:L2conti} in view of \eqref{eq:energydiffsmall}.

We prove now \eqref{eq:H1conti}. From Proposition \ref{prop:linear}
and \ref{prop:bilinear} we obtain
\begin{eqnarray}\label{eq:energydiff2prf1}
\left\{
\begin{array}{l}
\norm{v}_{F^{1}(1)}\les \norm{v}_{E^{1}(1)}+\norm{\partial_x[v(u_1+u_2)/2]}_{N^{1}(1)};\\
\norm{\partial_x[v(u_1+u_2)/2]}_{N^{1}(1)}\les
\norm{v}_{F^{1}(1)}(\norm{u_1}_{F^{1}(1)}+\norm{u_2}_{F^{1}(1)}).
\end{array}
\right.
\end{eqnarray}
Since $\norm{P_{\leq 0}(v)}_{E^{1}(1)}=\norm{P_{\leq
0}(\phi)}_{L^2}$, it follows from \eqref{eq:energydiffsmall} that
\begin{eqnarray}\label{eq:energyv}
\norm{v}_{F^{1}(1)}\les \norm{P_{\geq
1}(v)}_{E^{1}(1)}+\norm{\phi}_{H^{1,0}}.
\end{eqnarray}
To bound $\norm{P_{\geq 1}(v)}_{E^{1}(1)}$, we observe that
\[\norm{P_{\geq 1}(v)}_{E^{1}(1)}=\norm{P_{\geq
1}(\partial_xv)}_{E^0(1)},\] We write the equation for $U={P}_{\geq
-10}(\partial_x v)$ in the form
\begin{eqnarray}\label{eq:energyU}
\left\{
\begin{array}{l}
\partial_t U+\partial_x^3 U-\partial_x^{-1}\partial_y^2 U=P_{\geq -10}(-u_2\cdot\partial_x U)+P_{\geq -10}(G);\\
U(0)=P_{\geq -10}(\Lambda^{\sigma}\phi),
\end{array}
\right.
\end{eqnarray}
where
\begin{eqnarray*}
G&=&-P_{\geq -10}(u_2)\cdot \partial_x^2 P_{\leq -11}(v)-P_{\leq
-11}(u_2)\cdot \partial_x^2 P_{\leq
-11}(v)\\
&&-\partial_x v\partial_x(u_1+u_2)-v\cdot
\partial_x^2 u_1.
\end{eqnarray*}

It follows from \eqref{eq:L2esti} and \eqref{eq:energyU} that
\begin{eqnarray*}
\norm{U}_{E^0(1)}^2-\norm{\phi}_{H^{1}}^2&\les& \sum_{k\geq 1}
\left|\int_{\R\times [0,t_k]}\wt{P}_k(U)\wt{P}_{k}(u_2\cdot
\partial_x U)dxdt\right|\\
&& +\sum_{k\geq 1} \left|\int_{\R\times [0,t_k]}\wt{P}_k^2(U)P_{\geq
-10}(u_2)\cdot \partial_x^2 P_{\leq
-11}(v)dxdt\right|\\
&&+\sum_{k\geq 1} \left|\int_{\R\times
[0,t_k]}\wt{P}_k(U)\partial_x(u_1+u_2)\partial_x
vdxdt\right|\\
&&+\sum_{k\geq 1} \left|\int_{\R\times [0,t_k]}\wt{P}_k(U)v\cdot
\partial_x^2 u_1dxdt\right|\\
&:=&I+II+III+IV.
\end{eqnarray*}
For the contribution of $I$ we can bound it as in
\eqref{eq:energydiffprf2} and then get that
\[I\les \norm{U}_{F^0(1)}^2\norm{u_2}_{F^{1}(1)}. \]
For the contribution of $II$, since the derivatives fall on the low
frequency, then we can easily get
\[II\les \norm{U}_{F^0(1)}^2\norm{u_2}_{F^{1}(1)}. \]
We consider now the contribution of $IV$.
\begin{eqnarray*}
IV&\les& \sum_{k\geq 1} \sum_{k_1,k_2\in \Z_+}\left|\int_{\R\times
[0,t_k]}\wt{P}_k(U)\cdot \wt{P}_{k_1}(v)\cdot
\partial_x^2 \wt{P}_{k_2}(u_1)dxdt\right|\\
&\les& \sum_{k\geq 1}\sum_{|k-k_2|\leq 5, k_1 \leq k-10}
2^{2k-k_1/2}
\norm{\wt{P}_{k}(U)}_{F_k(1)}\norm{\wt{P}_{k_1}(v)}_{F_{k_1}(1)}\norm{\wt{P}_{k_2}(u_1)}_{F_{k_2}(1)}\\
&&+\sum_{k\geq 1}\sum_{k_1 \geq k-10} 2^{2k_2}2^{-\min(k,k_2)/2}
\norm{\wt{P}_{k}(U)}_{F_k(1)}\norm{\wt{P}_{k_1}(v)}_{F_{k_1}(1)}\norm{\wt{P}_{k_2}(u_1)}_{F_{k_2}(1)}\\
&\les&
\norm{U}_{F^0(1)}\norm{v}_{F^0(1)}\norm{u_1}_{F^{2}(1)}+\norm{U}_{F^0(1)}^2\norm{u_1}_{F^{1}(1)}.
\end{eqnarray*}
For the contribution of $III$, we obtain
\begin{eqnarray*}
III&\les&\sum_{k\geq 1}\sum_{k_1\leq k_2-10} \left|\int_{\R\times
[0,t_k]}\wt{P}_k(U)\partial_x\wt{P}_{k_1}(u_1+u_2)\cdot\partial_x
\wt{P}_{k_2}(v)dxdt\right|\\
&&+ \sum_{k\geq 1}\sum_{k_1\geq k_2-9}\left|\int_{\R\times
[0,t_k]}\wt{P}_k(U)\partial_x\wt{P}_{k_1}(u_1+u_2)\cdot\partial_x
\wt{P}_{k_2}(v)dxdt\right|\\
&\les&\norm{U}_{F^0(1)}^2(\norm{u_1}_{F^1(1)}+\norm{u_2}_{F^1(1)}).
\end{eqnarray*}
Therefore, we have proved that
\begin{eqnarray*}
\norm{U}_{E^0(1)}^2&\les
&\norm{\phi}_{H^{1,0}}^2+\norm{U}_{F^0(1)}^2(\norm{u_1}_{F^1(1)}+\norm{u_2}_{F^1(1)})\\
&&+\norm{U}_{F^0(1)}\norm{v}_{F^0(1)}\norm{u_1}_{F^{2}(1)}.
\end{eqnarray*}
By \eqref{eq:energydiffsmall}, Theorem \ref{thmmain} (a),
\eqref{eq:L2conti} and \eqref{eq:energyv} we get
\[\norm{U}_{E^0(1)}\les \norm{\phi_1-\phi_2}_{H^{1,0}}+\norm{\phi_1-\phi_2}_{L^2}\norm{\phi_1}_{H^{2,0}},\]
which combined with \eqref{eq:energyv} completes the proof of the
proposition.
\end{proof}

\noindent{\bf Acknowledgment.} This work is supported in part by
RFDP of China, Number 20060001010, the National Science Foundation
of China, grant 10571004; and the 973 Project Foundation of China,
grant 2006CB805902, and the Innovation Group Foundation of NSFC,
grant 10621061.

\end{document}